\newtheorem{result}{Result}
\newcommand{\R}{\mathcal{R}_0}
\title{ Population Growth and Competition Models with Decay and Competition
Consistent Delay}
\author{Chiu-Ju Lin${}^1$
    \and Ting-Hao Hsu${}^1$
    \and Gail~S.~K.~Wolkowicz${}^2$
}
\institute{
\Letter\;
Gail~S.~K.~Wolkowicz
\\
\email{wolkowic@mcmaster.ca}
\\[1em]
${}^1$
Department of Mathematics and Statistics, University of New Brunswick, NB, Canada
\\
${}^2$
Department of Mathematics and Statistics, McMaster University, Hamilton, ON, Canada
}
\date{\today}
\begin{document}

\maketitle

\begin{abstract}
We derive an alternative expression for a delayed logistic equation
in which the rate of change in the population involves a growth rate
that depends on the population density during an earlier time period.
In our formulation,
the delay in the growth term is consistent with
the rate of instantaneous decline in the population given by the model.
Our formulation is a modification of
[Arino et al., J.~Theoret.~Biol.~241(1):109--119, 2006]
by taking the intraspecific competition between the adults and juveniles into account.
We provide a complete global analysis
showing that no sustained oscillations are possible.
A threshold giving the interface between extinction and survival
is determined in terms of the parameters in the model.
The theory of chain transitive sets 
and the comparison theorem for cooperative delay differential equations are used to
	determine the global dynamics of the model.

	We extend our delayed logistic equation
to a system modeling the competition of two species.
For the competition model,
we provide results on local stability, bifurcation diagrams, and adaptive dynamics.
Assuming that the species with shorter delay produces fewer offspring at a time
than the species with longer delay,
we show that there is a critical value, $\tau^*$,
such that the evolutionary trend is for the  delay to approach $\tau^*$.
\end{abstract}

\section{Introduction}

The classical logistic equation
was introduced by Verhulst~\cite{Verhulst1838}
as an ordinary differential equation (ODE)
to describe population growth in a limited environment.
Hutchinson~\cite{Hutchinson1948}
noted that the classical logistic equation is not appropriate
when there is a lag in some of the population growth processes,
so he formulated a model as a delay differential equation (DDE) that
 is now known as
the {\em delayed logistic equation}
or {\em Hutchinson's equation}.
However,
Hutchinson's model has been criticized
by ecological modelers
(e.g., Geritz and Kisdi~\cite{Geritz2012}, Nisbet and Gurney~\cite{Nisbet1982}, Arino et al.~\cite{Arino2006})
because the derivation was
not based on clearly defined birth and death process and some of its predictions are
unrealistic, e.g., no matter how long the
delay, the population avoids extinction, and the fina size of the population is
independent of the length of the delay.

Hutchinson's equation is \begin{equation}\label{deq_Hutchinson}
  x'(t)=rx(t)\left(1-\frac{x(t-\tau)}{K}\right),
\end{equation}
where $x(t)$ represents the population density at time $t$,
$r$ is the intrinsic growth rate,
$K$ as the carrying capacity,
and the time lag $\tau$ is a positive constant.
Some authors
(Cooke et al.~\cite{Cooke1999}
and Hadeler and Bocharov \cite{Bocharov2000,Hadeler2003})
have argued that a delay should enter the birth term
rather than the death term and that
the model with delay should take the form
\begin{equation}\label{deq_birth}
  x'(t)=b(x(t-\tau)) x(t-\tau)e^{-\mu_0 \tau}-\mu(x(t))x(t),
\end{equation}
where $b(x)$, $\mu_0$ and $\mu(x)$
are respectively the birth function,
the juvenile death rate, 
and the adult death function.
An equation of the basic form \eqref{deq_birth} was also derived
from age-structured models by 
Gourley and Liu~\cite{Gourley2015} 
and Liu et al.~\cite{Liu2015}.  

For a model
where the population is divided
into two subpopulations, motile and proliferative,
Baker and Rost~\cite{Baker2020}
proposed the equation \begin{equation}\label{deq:Baker2020}
  x'(t)
  =-r x(t)+ rx(t-\tau)\left(
    2-r\int_{t-\tau}^tx(s)\;ds-x(t)
  \right),
\end{equation}
where the integral is related to the population of proliferative agents.

In Arino et al.~\cite{Arino2006}, an approach in the spirit of  Cooke et al.~\cite{Cooke1999} 
and Hadeler and Bocharov \cite{Bocharov2000,Hadeler2003},
proposed a delayed logistic growth model 
\begin{equation}\label{deq:A2006}
  x'(t)= \gamma\frac{ \mu e^{-\mu \tau}
  x(t-\tau)}{\mu  +\kappa (1- e^{-\mu \tau}) x(t-\tau)}
  -\mu\, x(t)- \kappa\, x^2(t),
\end{equation}
in the basic form of \eqref{deq_birth}.  They include the delay in the birth rate term.
However, they  argue further that the delay in the birth or growth rate term 
should not only involve the natural juvenile death rate, but should be consistent with the overall decline terms given by the model.
In the derivation in \cite{Arino2006}, it is assumed that the growth of the
population at time $t$ is proportional (with proportionality constant $\gamma$) to   the number of individuals alive at time
$t-\tau$  that survive until time $t$  avoiding elimination from the
population due to natural death, crowding, or
intraspecific competition with other individuals that were alive at time $t-\tau$
(the rational factor of $\gamma$ in \eqref{deq:A2006}).  They do this in a manner
that is consistent with the natural death, crowding, and intraspecific competition
given modelled by the equation.
However, they ignore decline due to crowding and competition between
those born during the  time interval $t-\tau$ to  $t$. This assumption is reasonable
when those two age groups are living  in very different environments.
This is true for example, for insects that undergo metamorphosis
such as mosquitoes and butterflies.
However,
this assumption is not suitable for mammals,
since then  juveniles and adults share the same environment.

In the new model of delayed logistic growth derived and analyzed in Section~\ref{sec_single},
the competition between
the adults and juveniles is also  taken into account  in a manner consistent with the other terms in the equation.
Thus, we assume
as in the derivation in \eqref{deq:A2006} in Arino et al.~\cite{Arino2006},
that growth in the population 
is proportional to
those individuals alive at time $t-\tau$ that survive until time $t$. 
However, in our derivation, 
the surviving individuals from time $t-\tau$ to time $t$,
avoid natural death, and crowdng and competition with the entire population alive
during that time interval.
In contrast to the discrete delay model
in \eqref{deq:A2006}
this new model has both
discrete and distributed delay terms as in \eqref{deq:Baker2020}.
We determine the global dynamics of the model. In particular,
 all solutions with positive initial data converge to a globally asymptotically
 stable equilibrium with value that has a magnitude that depends on the delay.
If the delay is too long,
this model predicts that the population dies out.
A threshold giving the interface between extinction and survival
is determined in terms of parameters in the model.
The result is consistent with Arino et al.~\cite{Arino2006} and does not suffer
from the issues raised criticizing Hutchinson's model.

One advantage of this model over  \eqref{deq:A2006}, is that it is
possible   to extend it to cover decline due to  competition between two
different populations consistent with the decline terms in the model.
In Section~\ref{sec_competition},
we propose a delay model for two species competition.
We analyze the local stability of the equilibria  and provide bifurcation diagrams for
that model. We also consider adaptive dynamics  using the delay as the evolving trait.

Conclusions and a discussion are given in  Section~\ref{sec_conclusion}.

\section{The Single Species Model}
\label{sec_single}

In this section we derive an logistic DDE
by modifying the classical logistic ODE.
The classical logistic ODE can be written as \begin{equation}\label{ode_logistic}
  x'(t)=\gamma x(t)-\mu x(t)- \kappa x(t)^2,
\end{equation}
where $x(t)$ is the population density of the species.
The parameters $\gamma$, $\mu$ and $\kappa$
correspond to growth, death, and intraspecific competition, respectively.

We assume that
the growth rate depends on
the population size some fixed $\tau$ time units in the past.
For each fixed time $t\ge 0$,
we  replace the term $\gamma x(t)$
in \eqref{ode_logistic} by $\gamma X(\tau)$
where $X(\tau)$ denotes the number of individuals alive at time $t-\tau$
that survive until time $t$
(see Fig.~\ref{fig:X}).

In Arino et al.~\cite{Arino2006}
the value of $X(\tau)$ is determined by solving 
\begin{equation}\label{deq:growth_delay_2006}
  X'(s)=-\mu X(s)-\kappa X(s)^2
\end{equation}
for $0\le s\le \tau$, with initial condition  $X(0)=x(t-\tau)$.
The intraspecific competition term $\kappa X(s)^2$
is based on the assumption that
the population $X(s)$ only competes
with $X(s)$ themselves rather than the entire population $x(t-\tau+s)$.
This assumption is reasonable for metamorphosis insects,
for which the juveniles and adults have very different living environment
and those two age groups cannot compete with each other.
However,
this assumption is not suitable for many animals such as mammals,
for which juveniles and adults share the same environment.
Therefore, we assume that the population $X(s)$
competes with the entire population $x(t-\tau+s)$.
Hence, we modify \eqref{deq:growth_delay_2006}
 as follows: 
\begin{equation}\label{deq:growth_delay}
  X'(s)=-\mu X(s)-\kappa X(s)\,x(t-\tau+s)
\end{equation}
for $0\le s\le \tau$.
Equation \eqref{deq:growth_delay} with initial condition $X(0)=x(t-\tau)$
can be solved explicitly
and has solution 
\begin{equation}\label{Xtau}
  X(\tau)
  =x(t-\tau)\exp\left(-\mu \tau-\kappa\int_{t-\tau}^{t}x(v)dv\right).
\end{equation}
Replacing the term $\gamma x(t)$ in \eqref{ode_logistic}
by $\gamma X(\tau)$,
where $X(\tau)$ is given in \eqref{Xtau},
we arrive at
\begin{equation}\label{deq_single}
  x'(t)
  =\gamma x(t-\tau)\exp\left(-\mu \tau-\kappa\int_{t-\tau}^{t}x(v)dv\right)
  -\mu\, x(t) -\kappa\, x^2(t).
\end{equation}
with initial data, $x(t)=\phi(t)\in C([-\tau,0],\mathbb{R}_+\setminus\{0\})$.

We call equation~\eqref{deq_single}
the {\em mixed alternative logistic DDE}.
We will discuss basic properties of \eqref{deq_single}
in Section~\ref{sec_single_local}
and provide global dynamics in Section~\ref{sec_single_global}.

\begin{figure}[tb!]
\centering
\includegraphics[trim = 0cm 2cm 0cm 0cm, clip, width=.6\textwidth]{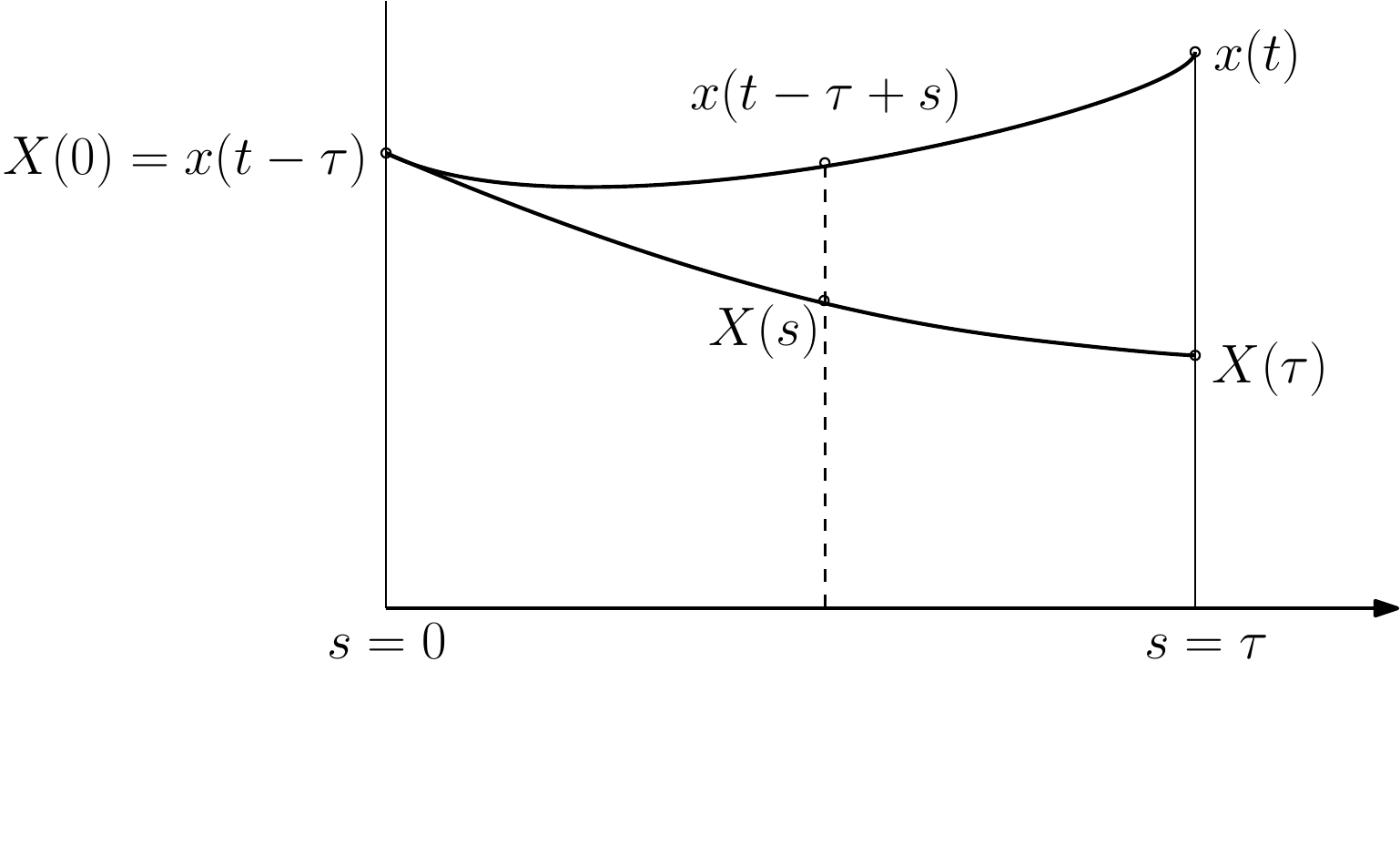}
\caption{%
The relationship between the populations $x$ and $X$ for $s\in[0,\tau]$.
Shown is  $x(t-\tau+s)$, the total population 
alive at time $t-\tau+s$ where $s\in[0,\tau]$  
and  $X(s)$, the population of
individuals alive at time $t-\tau+s$ that were alive at time $t-\tau$
and survived until time $t-\tau+s$. 
The two populations are related by \eqref{deq:growth_delay}
with initial condition $X(0)=x(t-\tau)$ that has explicit solution given by \eqref{Xtau}.
}\label{fig:X}
\end{figure}

\begin{remark}
In Arino et al.~\cite{Arino2006},
the individuals that survive from time $t-\tau$ to time $t$ is given by the
	solution of  \eqref{deq:growth_delay_2006},
and the resulting logistic model is given by equation \eqref{deq:A2006}.
The dynamics of \eqref{deq:A2006} is determined by a survival threshold
\begin{equation}\label{eq:tau_h}
	\tau_H=\frac{1}{\mu}\ln\left(\frac{\gamma}{\mu}\right).
\end{equation}
If the time delay is too large, i.e., $\tau\geq \tau_H$, the population dies out, as one might expect.
If the time delay is small enough, i.e.,  $\tau<\tau_H$,
then the populations size  approaches  a positive equilibrium, interpreted as the
	delay reduced carrying capacity, since it is a decreasing
	function of the delay. 
\end{remark}

\subsection{Equilibria and Local Stability}
\label{sec_single_local}

Following the proof of Arino et.~al~\cite[Proposition 3.1]{Arino2006},
together with the observation that solutions of \eqref{deq_single} satisfy
\begin{equation}\label{eq:well-posed}
  x'(t) \le \gamma e^{-\mu\tau} x(t-\tau_i)-\mu x(t)-\kappa x^2(t),
\end{equation}
it can be shown that for
any solution  of \eqref{eq:well-posed}
with initial data $\phi(t)\in C([0,\tau], \mathbb R_+\setminus\{0\})$,
a constant $M$  exists such that \[
  M> \max_{t\in [0,\tau]}\phi(t)
  \quad\text{and}\quad
  \gamma e^{-\mu\tau}-\mu - \kappa M <0,
\]
and the solution $x(t)$
remains positive and bounded above by $M$ for all $t>0$.
By the standard theory of delay differential equations
(see e.g.\ Hale and Verduyn~\cite{Hale1993}),
it follows that model~\eqref{deq_single} is well-posed,
i.e.,
every solution
with  positive initial data remains positive and
is eventually bounded above
by $K=(\gamma e^{-\mu\tau}-\mu)/\kappa$, a decreasing function of the delay,
$\tau$. As in Arino et al.~\cite{Arino2006}, we  also interpret this value as the delay reduced carrying capacity.

The extinction equilibrium $x=0$ always exists.
The nontrivial equilibrium of \eqref{deq_single} is determined by
\begin{equation}\label{def_xbar}\begin{aligned}
  \gamma e^{-\tau(\mu+\kappa{x})}-(\mu+\kappa{x})=0.
\end{aligned}\end{equation}
Since the left-hand side of \eqref{def_xbar}
decreases unboundedly as $x$ increases
and equals $\gamma e^{-\tau \mu}-\mu$ when $x=0$,
equation \eqref{def_xbar} has a unique positive root
if and only if $\gamma e^{-\mu\tau}>\mu$.
This is equivalent to the condition, $\tau<\tau_H$, with $\tau_H$ defined in \eqref{eq:tau_h}.

We define
\begin{equation}\label{def_R0}
  \mathcal R_0=\frac{\gamma e^{-\mu\tau}}{\mu}.
\end{equation}
Note that $\R{}>1$ if and only if $\tau<\tau_H$.
Equation \eqref{deq_single}
always has the trivial equilibrium $0$.
A positive equilibrium $x^*$
exists if and only if $\R>1$,
where $\R$ is given in \eqref{def_R0}.
In this section we show that
whenever the positive equilibrium exists,
it is locally asymptotically stable.

\begin{lemma}
Consider equation~\eqref{deq_single}.
Let $\R{}$ be defined by \eqref{def_R0}.
\begin{itemize}
\item[(a)]
If $\R{}<1$,
then the trivial equilibrium $x=0$ is
locally asymptotically stable
and there is no positive equilibrium.
\item[(b)]
If $\R{}>1$,
then the trivial equilibrium $x=0$ is unstable
and there exists a unique positive equilibrium $x^*>0$.
Moreover, $x^*$ is locally asymptotically stable.
\end{itemize}
\label{lem_local_single}
\end{lemma}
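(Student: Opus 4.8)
The plan is to establish the existence statements first and then determine stability in each case by linearizing \eqref{deq_single} about the relevant equilibrium and locating the roots of the resulting characteristic equation, invoking the principle of linearized stability for retarded functional differential equations (Hale and Verduyn~\cite{Hale1993}). The existence and uniqueness claims are already contained in the discussion preceding the lemma: equation~\eqref{def_xbar} has a unique positive root exactly when $\gamma e^{-\mu\tau} > \mu$, i.e.\ $\R > 1$, and none when $\R < 1$. So I would only need to supply the spectral arguments.

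For the trivial equilibrium, writing $x(t) = y(t)$ with $y$ small and discarding the quadratic and product terms (the distributed-delay contribution drops out since its coefficient carries a factor $x^* = 0$), the linearization is $y'(t) = \gamma e^{-\mu\tau} y(t-\tau) - \mu y(t)$, with characteristic equation $\lambda + \mu = \mu\R\, e^{-\lambda\tau}$. If $\R < 1$, I would argue by contradiction: any root with $\operatorname{Re}\lambda = u \ge 0$ satisfies $|\lambda+\mu| = \mu\R e^{-u\tau} \le \mu\R < \mu$, while $|\lambda+\mu| \ge \operatorname{Re}(\lambda+\mu) = u+\mu \ge \mu$, a contradiction; hence every root has negative real part and $x=0$ is locally asymptotically stable. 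If $\R > 1$, the real function $p(\lambda) = \lambda + \mu - \mu\R e^{-\lambda\tau}$ satisfies $p(0) = \mu(1-\R) < 0$ and $p(\lambda)\to+\infty$ as $\lambda\to+\infty$, so it has a positive real root and $x=0$ is unstable.

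The substantive case is the stability of $x^*$. Setting $\beta := \mu + \kappa x^*$, the equilibrium relation \eqref{def_xbar} reads $\gamma e^{-\tau\beta} = \beta$, and linearizing \eqref{deq_single} about $x^*$ yields
\begin{equation*}
 y'(t) = -(\beta+\kappa x^*)\,y(t) + \beta\, y(t-\tau) - \beta\kappa x^*\!\int_{t-\tau}^{t}\! y(v)\,dv.
\end{equation*}
Substituting $y(t)=e^{\lambda t}$ and using $\int_{t-\tau}^t e^{\lambda(v-t)}\,dv = (1-e^{-\lambda\tau})/\lambda$ for $\lambda\ne 0$ produces a characteristic function $f(\lambda)$ with a removable singularity at the origin; the cleanest route is to clear the denominator and observe the factorization
\begin{equation*}
 \lambda f(\lambda) = (\lambda+\kappa x^*)\bigl(\lambda + \beta - \beta e^{-\lambda\tau}\bigr).
\end{equation*}
I would then check that $f(0) = \kappa x^*(1+\beta\tau) \neq 0$, so the root $\lambda = 0$ appearing on the right is spurious (introduced by multiplying by $\lambda$) and is not a genuine eigenvalue. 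The genuine eigenvalues are therefore $\lambda = -\kappa x^* < 0$ together with the roots of $\lambda + \beta = \beta e^{-\lambda\tau}$ other than $0$. For the latter I would again use a modulus estimate: if $u = \operatorname{Re}\lambda \ge 0$ then $(u+\beta)^2 + (\operatorname{Im}\lambda)^2 = \beta^2 e^{-2u\tau} \le \beta^2 \le (u+\beta)^2$, forcing $u=0$ and $\operatorname{Im}\lambda = 0$, i.e.\ $\lambda = 0$. Hence every genuine eigenvalue has negative real part, which gives local asymptotic stability of $x^*$.

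I expect the factorization together with the correct disposal of the spurious root at $\lambda=0$ to be the main obstacle: the distributed-delay term forces a division by $\lambda$, and one must verify $f(0)\ne 0$ to be sure the apparent zero eigenvalue is an artifact rather than a genuine loss of hyperbolicity. Once the characteristic equation is reduced to the two transcendental factors, the modulus arguments are routine. A minor point to record is that \eqref{deq_single} is of retarded type, so its spectrum has real parts bounded above with no accumulation at the imaginary axis, legitimizing the passage from ``no root in the closed right half-plane'' to local asymptotic stability.
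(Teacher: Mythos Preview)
Your proposal is correct and follows essentially the same route as the paper: the paper also reduces the characteristic equation at $x^*$ to the factored form $\bigl(m\frac{e^{-\lambda\tau}-1}{\lambda}-1\bigr)(\lambda+\kappa x^*)=0$ with $m=\mu+\kappa x^*$ (equivalent to your $(\lambda+\kappa x^*)(\lambda+\beta-\beta e^{-\lambda\tau})=0$), checks that $\lambda=0$ is not a genuine root, and then rules out roots with nonnegative real part in the transcendental factor. The only cosmetic differences are that the paper invokes Hayes' Theorem for the trivial equilibrium where you give a self-contained modulus estimate, and the paper separates real and imaginary parts for the transcendental factor where you use a squared-modulus inequality; both arguments yield the same conclusion.
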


\begin{proof}
Throughout this proof, we scale $x\mapsto \kappa x$
in equation~\eqref{deq_single}, and hence without loss of generality, assume that $\kappa=1$.

First we show that
the local stability of an equilibrium $\bar{x}$ of \eqref{deq_single}
can be determined by its characteristic equation
 \begin{equation}\label{char_xbar}\begin{aligned}
  \gamma e^{-\tau(\mu+{\bar{x}})}\left(
  e^{-\lambda\tau}+\frac{e^{-\lambda\tau}-1}{\lambda}\,{ \bar{x}}
  \right)
  -\mu-2{\bar{x}}
  -\lambda
  =0
\end{aligned}\end{equation}

Let $\mathcal{C}$ be the set of all bounded continuous functions on $[-\tau,\infty)$.
For any function $x\in \mathcal{C}$ and $t\ge 0$,
we denote $x_t$ the function in $\mathcal{C}$ defined by
\begin{equation}\notag\begin{aligned}
  x_t(s)= x(s+t),\quad s\in [-\tau,\infty).
\end{aligned}\end{equation}
Let $F(x_t)$ be the right-hand side of \eqref{deq_single} for any $x\in \mathcal{C}$.
Then $F$ maps $\mathcal{C}$ into $\mathcal{C}$.
If $\bar{x}\in \mathcal{C}$ satisfies $F(\bar{x})=0$,
then the linearization $DF(\bar{x})$ of $F$ at $\bar{x}$
is given by \begin{equation}\label{DF_xbar}\begin{aligned}
  &\big[DF(\bar{x})(\varphi)\big](t)
  \\
  &=\gamma e^{-\tau\mu-\int_{t-\tau}^t\bar{x}(s)\;ds}\left[
    -\left(\int_{t-\tau}^t\varphi(s)\;ds\right)\bar{x}(t-\tau)
    + \varphi(t-\tau)
  \right]
  -\big(\mu+2\bar{x}(t)\big)\varphi(t)
\end{aligned}\end{equation}
for all $\varphi\in \mathcal{C}$.
Taking $\bar{x}$ to be the constant function $\bar{x}$,
the eigenvalue problem
$\big[DF(\bar{x})(\varphi)\big](t)=\lambda\varphi(t)$
yields \begin{equation}\label{DF_xhat}
  \lambda \varphi(t)
  =\gamma e^{-\mu\tau-\bar{x}\tau}\left[
  -\bar{x} \left(\int_{t-\tau}^t\varphi(s)\;ds\right)
  + \varphi(t-\tau)
  \right]
  -\big(\mu+2\bar{x}\big)\varphi(t).
\end{equation}
It can be shown
by applying the Laplace transform to \eqref{DF_xhat}
(see e.g.\ \cite[Section 1.5]{Hale1993})
that $\varphi=e^{\lambda t}$
and that
\[
  \lambda e^{\lambda t}
  =\gamma e^{-\mu\tau-\bar{x}\tau}\left[
  -\bar{x} \left( \frac{1-e^{-\tau}}{\lambda}e^{\lambda t}\right)
  + e^{\lambda(t-\tau)}
  \right]
  -\big(\mu+2\bar{x}\big)e^{\lambda t},
\]  leading to \[
  \left[
    \gamma e^{-\tau(\mu+{\bar{x}})}\left(
    e^{-\lambda\tau}+\frac{e^{-\lambda\tau}-1}{\lambda}\,{x_0}
    \right)
    -\mu-2{\bar{x}}
    -\lambda
  \right] e^{\lambda t}=0.
\] Hence all eigenvalues of $DF(\bar{x})$ are roots of \eqref{char_xbar}.
Therefore,
if all roots of  equation \eqref{char_xbar}
have negative real parts,
then $\bar{x}$ is locally asymptotically stable
and if any root has a positive  real part positive, then $\bar{x}$ is unstable.

At $\bar{x}=0$,
the linearization \eqref{DF_xbar} is \begin{equation}\label{dphi_x0}
  \varphi'(t) = \gamma e^{-\tau\mu}\varphi(t) - \mu\varphi(t)
\end{equation}
and the characteristic equation \eqref{char_xbar} at $\bar{x}=0$ is
\begin{equation}\label{char_x0}
  \gamma e^{-\tau\mu} e^{-\lambda\tau} -\mu  -\lambda  =0.
\end{equation}
Note that \eqref{dphi_x0} coincides with
the linearization of \eqref{deq:A2006} at $x=0$
(see \cite[Appendix B]{Arino2006}).
Applying Hayes Theorem \cite{Hayes1950},
it follows that all roots of  equation \eqref{char_x0}
have negative real parts if $\gamma e^{-\mu\tau}<\mu$,
and some root of the equation
has a positive real part positive if $\gamma e^{-\mu\tau}>\mu$.
This proves (a) and the first part of (b).

To prove the second part of (b), next we assume that $\R{}>1$.
Then a positive equilibrium, $x^*>0$, for equation \eqref{deq_single} exists.
Using equation \eqref{def_xbar},
the characteristic equation \eqref{char_xbar} at $\bar{x}=x^*$
can be written as
\begin{equation}\notag\begin{aligned}
  (\mu+{x^*})\left(
  e^{-\lambda\tau}+\frac{e^{-\lambda\tau}-1}{\lambda}\,{x^*}
  \right)
  -\mu-2 {x^*}
  -\lambda=0.
\end{aligned}\end{equation}
Note that $\lambda=0$ is not a root
since the left-hand side of the equation
has the nonzero limit, $-[(\mu+x^*)\tau+1]x^*$, as $\lambda\to 0$.
Writing $m=\mu+ x^*$, the above equation becomes \[
  m(e^{-\lambda\tau}-1)
  -\lambda
  + x^*\left(
    m\frac{e^{-\lambda\tau}-1}{\lambda}-1
  \right)
  =0,
\] or
\begin{equation}\label{lambda_m}
  \left(m\,\frac{e^{-\lambda\tau}-1}{\lambda}-1\right)(\lambda+{x^*})=0.
\end{equation}
The second factor
of the left-hand side of \eqref{lambda_m}
has a single root $\lambda=- x^*<0$.
Let $\lambda=\alpha+i \beta$ be a root of the first factor
of the left-hand side of \eqref{lambda_m}.
Then \[
  m\Big[
    (e^{-\alpha\tau}\cos \beta\tau-1)
    -(e^{-\alpha\tau}\sin \beta\tau) i
  \Big]
  =\alpha+i\beta.
\]
Hence $\alpha<0$.
We conclude that whenever $\mathcal{R}_0>1$, $x^*>0$ exists and is local asymptotically stable.
\qed
\end{proof}

\subsection{Global Dynamics}
\label{sec_single_global}

In Section~\ref{sec_single_local} we see that
the positive equilibrium $x^*$ of \eqref{deq_single}
is locally asymptotically stable whenever it exists.
In this section we show that
in this case $x^*$ attracts all positive solutions
and is thus globally asymptotically stable.

The main difficulty in the proof
arises from the fact that equation~\eqref{deq_single}
does not have a positive delayed feedback,
that is,
the right hand side of the equation
is not an increasing function of the delayed term.
In contrast, the alternative logistic DDE \eqref{deq:A2006}
has a positive delayed feedback,
so the proof in Arino et al.~\cite{Arino2006} of the global stability
of the positive equilibrium of \eqref{deq:A2006}
is not applicable.

\begin{theorem}
Consider equation~\eqref{deq_single}.
Let $\R{}$ be defined by \eqref{def_R0}.
\begin{itemize}
\item[(a)]
If $\R{}\leq1$, then $0$
attracts all nonnegative solutions.
Moreover, if $\R{}<1$,
then $0$  is globally asymptotically stable.
\item[(b)]
If $\R{}>1$, then the unique positive equilibrium $x^*$
is globally asymptotically stable.
\end{itemize}
\label{thm_single}
\end{theorem}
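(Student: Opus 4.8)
The plan is to treat the extinction case (a) by comparison and a fluctuation argument, and the survival case (b) by combining uniform persistence, the comparison theorem for cooperative delay equations, and the theory of internally chain transitive sets. Throughout I would first record the standing facts already available: by the well-posedness discussion every positive solution is eventually bounded by $K=(\gamma e^{-\mu\tau}-\mu)/\kappa$, so each forward orbit is precompact and its omega limit set $\omega$ is nonempty, compact and invariant; and by Lemma~\ref{lem_local_single} the equilibrium $0$ is locally asymptotically stable when $\R<1$ and unstable when $\R>1$, while $x^*$ is locally asymptotically stable whenever it exists.

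For (a), since $\exp(-\kappa\int_{t-\tau}^t x(v)\,dv)\le 1$, every solution satisfies
\[
x'(t)\le \gamma e^{-\mu\tau}x(t-\tau)-\mu x(t)-\kappa x(t)^2 ,
\]
whose right-hand side is nondecreasing in the delayed argument, so the comparison theorem for cooperative delay equations bounds $x$ above by the solution of the associated scalar cooperative equation. When $\R<1$ (i.e.\ $\gamma e^{-\mu\tau}<\mu$) that comparison equation has $0$ as its globally attracting equilibrium, giving $x(t)\to0$; together with the local asymptotic stability of $0$ from Lemma~\ref{lem_local_single}(a) this yields global asymptotic stability. The borderline case $\R=1$ I would handle directly by the fluctuation lemma: choosing $t_n\to\infty$ with $x(t_n)\to x^\infty:=\limsup x$ and $x'(t_n)\to0$, and using $\int_{t_n-\tau}^{t_n}x\ge0$ together with $\limsup x(t_n-\tau)\le x^\infty$, the inequality above forces $0\le -\kappa (x^\infty)^2$, hence $x^\infty=0$.

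For (b) I would proceed in three steps. First, uniform persistence: because $0$ is the only invariant set on the boundary $\{x\equiv0\}$ and is isolated and acyclic, and because the instability in Lemma~\ref{lem_local_single}(b) makes $0$ a uniform weak repeller when $\R>1$, the standard acyclicity/persistence theory yields $\eta>0$ with $\liminf_{t\to\infty}x(t)\ge\eta$; in particular $0\notin\omega$. Second, comparison bounds: writing $m^*=\min_{\psi\in\omega}\psi(0)$ and $M^*=\max_{\psi\in\omega}\psi(0)$ and bounding the integral term eventually by $\tau m^*$ from below and $\tau M^*$ from above, the cooperative comparison equations have explicit positive equilibria, and letting their attractors control $\limsup$ and $\liminf$ gives
\[
\mu+\kappa M^*\le \gamma e^{-\tau(\mu+\kappa m^*)},\qquad \mu+\kappa m^*\ge \gamma e^{-\tau(\mu+\kappa M^*)} .
\]
Comparing with the defining relation \eqref{def_xbar} for $x^*$, these sandwich $x^*$, giving $m^*\le x^*\le M^*$.

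Third, and finally, I would close the argument with chain transitivity. The omega limit set $\omega$ is internally chain transitive for the solution semiflow, and $x^*$ is a locally asymptotically stable equilibrium by Lemma~\ref{lem_local_single}(b); the key structural result from the theory of chain transitive sets is that an internally chain transitive set meeting the basin of attraction of a locally asymptotically stable equilibrium must coincide with that equilibrium, so it suffices to show that $\omega$ enters the basin of $x^*$, after which $\omega=\{x^*\}$ and $x(t)\to x^*$. I expect this last step to be the main obstacle, precisely because \eqref{deq_single} lacks positive delayed feedback: the comparison bounds above cannot by themselves force $m^*=M^*$, since the associated scalar map $s\mapsto\gamma e^{-\tau s}$ admits $2$-cycles once $\tau(\mu+\kappa x^*)>1$, which is exactly the regime where a naive monotone iteration fails to converge. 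The role of the chain-transitive-set machinery is to use the full flow rather than these scalar envelopes, ruling out nontrivial recurrent behavior and forcing $\omega$ into the basin of the locally stable $x^*$; combining this with the local asymptotic stability already established then upgrades attraction to global asymptotic stability.
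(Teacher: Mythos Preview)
Your treatment of part~(a) is fine and close in spirit to the paper's (the paper actually handles both $\R<1$ and $\R=1$ with a single fluctuation argument, but your split into comparison plus fluctuation is equally valid).

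The genuine gap is in part~(b), and you have essentially flagged it yourself. Your steps 1 and 2 are correct, but step 3 is not a proof: you assert that chain transitivity ``rules out nontrivial recurrent behavior and forces $\omega$ into the basin of the locally stable $x^*$,'' yet internal chain transitivity is merely a property every omega limit set has; it does not by itself exclude periodic orbits or other recurrence. The Hirsch--Smith--Zhao result you quote requires you to first exhibit a point of $\omega$ inside the basin of $x^*$ (in the $C([-\tau,0])$ topology, not just a scalar value near $x^*$), and nothing in your sandwich $m^*\le x^*\le M^*$ supplies such a point. As you correctly observe, the envelope map $s\mapsto\gamma e^{-\tau s}$ has genuine $2$-cycles once $\tau(\mu+\kappa x^*)>1$, so the monotone iteration cannot be closed, and you have no replacement argument.

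The paper fills this gap with an idea you are missing: the function
\[
Y(t)=\gamma\exp\!\Big(-\mu\tau-\kappa\!\int_{t-\tau}^{t}x(s)\,ds\Big)-\mu-\kappa x(t)
\]
satisfies $Y'(t)=-\kappa\,x(t)\,Y(t)$ along every solution, so $Y(t)\to 0$ monotonically. Chain transitivity is then used, not to jump into a basin, but to restrict attention to the invariant set $\{Y\equiv 0\}$; on that set \eqref{deq_single} collapses to
\[
x'(t)=\big(\mu+\kappa x(t)\big)\big(x(t-\tau)-x(t)\big),
\]
which \emph{does} have positive delayed feedback. The remaining work is a careful comparison/Arzel\`a--Ascoli argument on this limiting cooperative equation, together with the constraint $Y=0$, to force convergence to the unique admissible constant $x^*$. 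Without the Lyapunov-type reduction via $Y$, your outline does not close.
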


\begin{proof} {\it (a)}
First we assume that $\R\le 1$.
Let $x(t)$ be any nonnegative solution of \eqref{deq_single}.
We claim that $x(t)$ converges as $t\to\infty$.
Suppose that $x(t)$ does not converge.
Then
\[
  \hat{x}:=\limsup_{t\to\infty}x(t)>\liminf_{t\to\infty}x(t)\geq0.
\]
By the fluctuation lemma \cite{Hirsch1985},
there exists an increasing sequence of times $\{t_n\}$ such that
$\lim_{n\to\infty}x(t_n)=\hat{x}$ and $x'(t_n)=0$ for all $n$.
By \eqref{deq_single} it follows that \begin{equation}\notag
  0
  =\gamma \exp\left(-\mu\tau-\kappa\int_{t_n-\tau}^{t_n}x(s)\;ds\right)x(t_n-\tau)
  -\mu \hat{x}-\kappa \hat{x}^2.
\end{equation}
Hence \begin{equation}\notag
  0
  <\gamma e^{-\mu \tau}x(t_n-\tau)
  -\mu \hat{x}-\kappa \hat{x}^2.
\end{equation}
Taking limit superior as $n\to \infty$, we obtain 
\begin{equation}\notag
\begin{aligned}
  0
  &\le \gamma e^{-\mu \tau}\hat{x}
  -\mu \hat{x}-\kappa \hat{x}^2
  \\
  &= \hat{x}\big(
    \mu(\R{}-1)-\kappa\hat{x}
  \big)<0,
\end{aligned}\end{equation}
 a contradiction.
Hence $\lim_{t\to \infty}x(t)$ exists.
By Lemma~\ref{lem_local_single},
 system \eqref{deq_single}
has no positive equilibrium when $\R\le 1$.
Hence, we conclude that all nonnegative solutions converge to $0$.

\noindent 
	{\it (b)} Next, we assume that $\R> 1$.
For any nonzero nonnegative solution $x(t)$ of \eqref{deq_single},
we introduce the Lyapunov-type function \begin{equation}\notag
  Y(t)=\gamma
  \exp\left(
    -\mu\tau-\kappa \int_{t-\tau}^t x(s)ds
  \right)
  -\mu- \kappa x(t).
\end{equation}
Then \eqref{deq_single} yields
\begin{equation}\notag\begin{aligned}
  Y'(t)
  &=
  \gamma \kappa
  \exp\left(
    -\mu\tau-\kappa \int_{t-\tau}^t x(s)ds
  \right)
  \Big[- x(t)+ x(t-\tau)\Big]
  \\
  &\qquad
  -\kappa \left[
    \gamma
    \exp\left(
      -\mu\tau-\kappa \int_{t-\tau}^t x(s)ds
    \right)
     x(t-\tau)
    -\mu x(t)- \kappa x^2(t)
  \right]
  \\
  &=- \kappa\, x(t)Y(t).
\end{aligned}\end{equation}
Hence $Y(t)$ converges  monotonically  to $0$ as $t$ tends to infinity,
that is
\begin{equation}\label{eq:Y}
  \lim_{t\to\infty}\Big[
    \gamma
    \exp\left(
      -\mu\tau-\kappa \int_{t-\tau}^t x(s)ds
    \right)
    -\mu- \kappa\, x(t)
  \Big]=0.
\end{equation}

By the theory of chain transitive sets (see e.g.~\cite{Hirsch2001}),
to study the global dynamics of \eqref{deq_single},
from \eqref{eq:Y} it is enough to study
the restriction of \eqref{deq_single}
on the set of functions that
satisfy the integral equation
\begin{equation}\label{integral_single}
  \kappa\, x(t)= \gamma
  \exp\left(
      -\mu\tau
      -\kappa\int_{t-\tau}^t x(s)ds
    \right)
    -\mu.
\end{equation}
The restriction of \eqref{deq_single}
on the set of functions satisfying \eqref{integral_single}
can be written as \[
  x'(t)
  =x(t-\tau)\big[\mu+\kappa\,x(t)\big]
  -\mu x(t) -\kappa x^2(t),
\] or, equivalently, \begin{equation}\label{deq_single_limit}
  x'(t)
  =\big[\mu+\kappa\, x(t)\big]
  \,\big[x(t-\tau)-x(t)\big].
\end{equation}

For any positive solution $x(t)$ of \eqref{deq_single_limit} that satisfies \eqref{integral_single},
we  show that $x(t)$ converges to $x^*$.
Since the only constant solution satisfying \eqref{integral_single} is $x^*$,
it suffices to show that $x(t)$ converges as $t\to \infty$.

We set functions \[
  \varphi_n(s)=x(n\tau+s),\;\;
  s\in [0,\tau],
\]
and numbers \[
  A_n=\max_{s\in[0,\tau]} \varphi_n(s),
\]
for $n=0,1,2,\dots$.
Note that
equation \eqref{deq_single_limit}
has a positive delayed feedback.
Since each constant function $A_n$ is a solution of \eqref{deq_single_limit},
by the comparison theorem
for cooperative delay differential equations
(\cite[Theorem 5.1.1]{Smith1995}), \[
  \varphi_{n+1}(s)\le A_n,\;\;
  s\in [0,\tau],
\]
for $n=0,1,2,\dots$.
It follows that $\{A_n\}$ is a decreasing sequence.
Hence,
\begin{equation}\label{limit_An}
  \lim_{n\to \infty}A_n
  = A,
\end{equation}
where $A=\limsup_{t\to \infty} x(t)$.
To show that $\lim_{t\to \infty} x(t)$ exists,
we  show that $\{\varphi_n\}_n$ converges to a constant
(see Fig.~\ref{fig:limiting}(a)).
We claim that every subsequence of $\{\varphi_n\}_n$
has a further subsequence that converges to
the constant function $A$
in the $C([0,\tau])$-norm.

\begin{figure}[hbtp]\centering
  (a)\hspace{17em}(b)
  \\
  \includegraphics[width=0.48\textwidth]{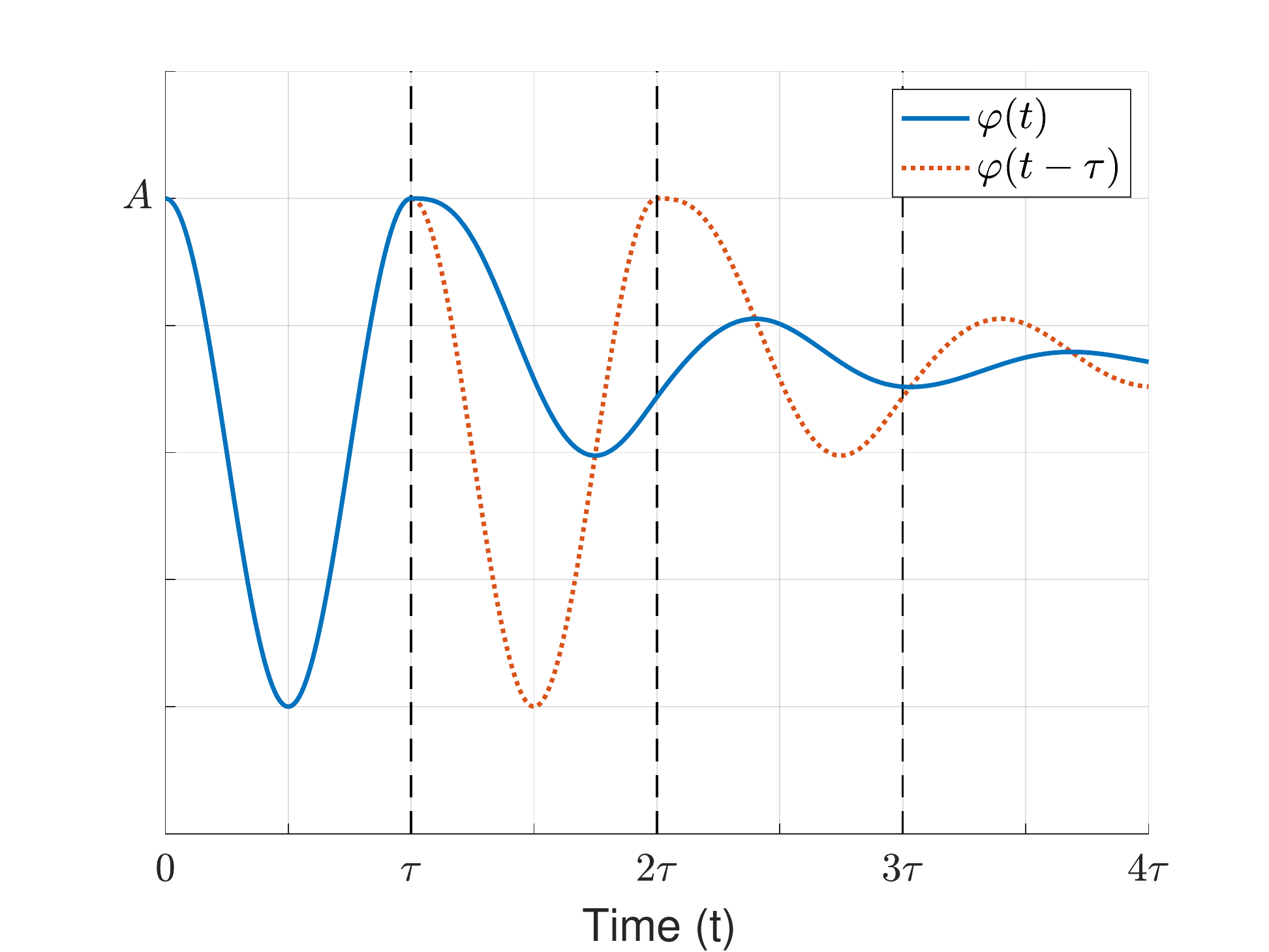}
  \includegraphics[width=0.48\textwidth]{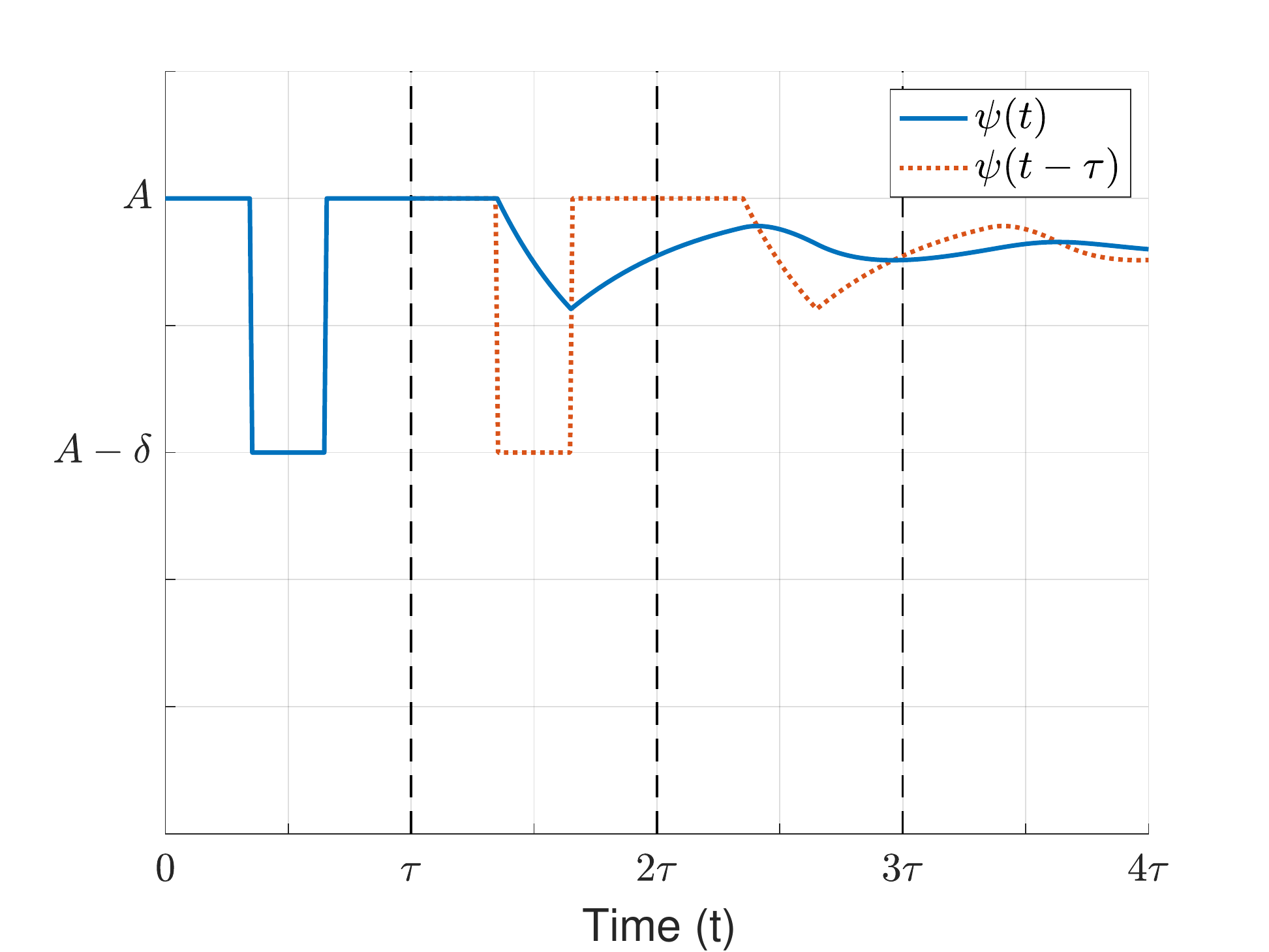}
\caption{(a)
The function $\varphi_n(s)$ is
the restriction of $x(t)$ on the interval $[n\tau,(n+1)\tau]$.
The maximum value $A_n$ of $\varphi_n(t)$
is a decreasing sequence.
(b) The function $\psi(t)$ is defined by \eqref{def_psi} on $[0,\tau]$
and by \eqref{deq_single_limit} for $t\ge \tau$.
When $t\ge \tau$,
$\psi'(t)$ has the same sign as $\psi(t-\tau)-\psi(t)$.
The maximum value of $\psi$ on $[2\tau,3\tau]$
is strictly less than that on $[0,\tau]$.
}
\label{fig:limiting}
\end{figure}

Given any subsequence of $\{\varphi_n\}$,
since the sequence of functions $\{\varphi_n\}$
is uniformly bounded on the compact set $[0,\tau]$,
by the Arzela-Ascoli theorem
there is a uniformly convergent
further subsequence $\{\varphi_{n_j}\}$.
Let $\varphi$ be the limit of $\{\varphi_{n_j}\}$.
We extend the domain of $\varphi(s)$
by setting $\varphi(t)$ to be a solution of \eqref{deq_single_limit} for $t\ge \tau$.
Note that for each $k=0,1,2,\dots$,
the sequence $\{\varphi_{n_j+k}(s)\}_j$
converges uniformly to $\varphi(k\tau+s)$.
By \eqref{limit_An},
it follows that
\begin{equation}\label{max_phi_k}
  \max_{s\in [0,\tau]}\varphi(k\tau+s)= A
\end{equation} for $k=0,1,2,\dots$.
To prove that $\varphi$ is a constant function,
we  show that \eqref{max_phi_k}
fails to hold for some $k$ if $\varphi$ is non-constant.

Suppose by contradiction that $\varphi$ is non-constant.
Then there exists $0<s_1<s_2<\tau$ and $\delta>0$ such that \[
  \varphi(s)<A-\delta,
  \quad\text{for}\;\;s\in(s_1,s_2).
\]
We define
(see Fig.~\ref{fig:limiting}(b))
\begin{equation}\label{def_psi}
  \psi(s)=\begin{cases}
    A,& \text{for}\;\;s\in [0,s_1]\cup [s_2,\tau],
    \\
    A-\delta,& \text{for}\;\;s\in (s_1,s_2).
  \end{cases}
\end{equation}
Then $\varphi(s)\le \psi(s)$ for $s\in [0,\tau]$.
We extend the domain of $\psi(s)$ for $t\ge \tau$
by setting $\psi(t)$ to be a solution of \eqref{deq_single_limit} for $t\ge \tau$.
With $\psi(\tau+t)$ playing the role of $x(t)$ in equation~\eqref{deq_single_limit},
the values of $\psi'(\tau+s)$ and $\psi(s)-\psi(\tau+s)$ have the same sign.
By standard ODE theory,
it is straightforward to show that
\begin{equation}\label{psi_max}
  \psi(\tau+s)\begin{cases}
    =A,& \text{for}\;s\in [0,s_1],
    \\
    <A,&\text{for}\;s\in (s_1,\tau].
  \end{cases}
\end{equation}
With $\psi(2\tau+t)$ playing the role of $x(t)$ in  equation~\eqref{deq_single_limit},
the values of $\psi'(2\tau+s)$ and $\psi(s)-\psi(2\tau+s)$ have the same sign.
By \eqref{psi_max} and the standard ODE theory,
it is straightforward to show that
\begin{equation}\label{psi2_max}
  \psi(2\tau+s)< A
  \quad\text{for}\;\;s\in [0,\tau].
\end{equation}

Since $\varphi(s)\le \psi(s)$ for $s\in [0,\tau]$,
by the comparison theorem
we have $\varphi(s)\le \psi(s)$ for $s\in [0,3\tau]$.
By \eqref{psi2_max} it follows that \begin{equation}\notag
  \max_{s\in [0,\tau]}\varphi(2\tau+s)<A,
\end{equation}
contradicting  \eqref{max_phi_k}.
This implies that $\varphi$ is a constant function.
Hence, $x(t)$ converges to a constant as $t\to \infty$.
We conclude that $\lim_{t\to \infty}x(t)=x^*$.
\qed
\end{proof}

\section{A Competition Model}
\label{sec_competition}

In this section,
we derive a system modeling the competition between two species, $x_1$ and $x_2$.
For $i=1,2$, we denote the growth rate of the $i$th species by $\gamma_i$,
death rate by $\mu_i$,
decay-consistent delay by $\tau_i$,
the intra-specific competition parameter by $\kappa_i$,
and the inter-specific competition parameter by $\alpha_i$.

For each fixed time $t\ge 0$,
we  denote by $X_1(s)$, the number of individuals
of species $x_1$ alive at time $t-\tau_1$ that survive until time $t-\tau_1+s$.
Assuming decay consistent until decay rate of species $x_1$
is a combination of the natural death $\mu_1X_1(s)$,
the intra-specific competition $ X_1(s)x_1(t-\tau_1+s)$,
and the inter-specific competition $\alpha_2X_1(s)x_2(t-\tau_1+s)$,
we assume that $X_1(s)$ satisfies the following ODE
\begin{equation}\notag
  X_1'(s)=-\mu_1X_1(s)-\kappa_1 X_1(s)x_1(t-\tau_1+s)-\alpha_2X_1(s)x_2(t-\tau_1+s)
\end{equation}
with initial condition $X_1(0)=x_1(t-\tau_1)$.
This has explicit solution  \[
  X_1(\tau_1)
  =x_1(t-\tau_1)
  \exp\left(
    -\mu \tau_1
    -\displaystyle\int_{t-\tau_1}^t \Big(\kappa_1x_1(s)+\alpha_2 x_2(s)\Big) ds
  \right).
\]
Similarly,
we set $X_2(s)$ to be the number of individuals of species $x_2$ that survive from
time $t-\tau_2$ to  $t-\tau_2+s$
and assume that
\[
  X_2(\tau_2)
  =x_2(t-\tau_2)
  \exp\left(
    -\mu \tau_2
    -\int_{t-\tau_2}^t \Big(\kappa_2x_2(s)+\alpha_1 x_1(s)\Big) ds
  \right).
\]
Therefore, we propose the two-species competition model:
\begin{equation}\label{deq_competition}\begin{aligned}
  x_1'(t)
  &=\gamma_1 x_1(t-\tau_1)
 \exp\left(
    -\mu_1\tau_1-\displaystyle\int_{t-\tau_1}^{t}\kappa_1x_1(s)+\alpha_2 x_2(s)ds
  \right)  \\[.4em]
  &\qquad
  -\mu_1 x_1(t) -\kappa_1 x_1^2(t)-\alpha_2 x_1(t)x_2(t),
  \\
  x_2'(t)
  &=\gamma_2 x_2(t-\tau_2)
 \exp\left(
     -\mu_2\tau_2-\displaystyle\int_{t-\tau_2}^{t}\kappa_2x_2(s)+\alpha_1 x_1(s)ds
	\right)  \\[.4em]
  &\qquad
   -\mu_2 x_2(t) -\kappa_2 x_2^2(t)-\alpha_1 x_1(t)x_2(t).
\end{aligned}\end{equation}

Following the proof of Arino et.~al~\cite[Proposition 3.1]{Arino2006},
together with the observation that solutions of \eqref{deq_competition}
satisfy
\begin{equation}\label{eq:well-posed_2}
  x_i'(t) \le \gamma_ie^{-\mu_i\tau_i}x_i(t-\tau_i)-\mu_ix_i(t)-\kappa_ix_i^2(t),\quad i=1,2,
\end{equation}
it can be shown that
for any solution of \eqref{eq:well-posed_2} with 
initial data $(\phi_1(t),\phi_2(t))$
where $\phi_1,\phi_2\in C([0,\tau], \mathbb R_+\setminus\{0\})$,
a constant $M$ exists such that \[
  M> \max_{t\in [0,\tau]}\phi_i(t)
  \quad\text{and}\quad
  \gamma_ie^{-\mu_i\tau_i}-\mu_i - \kappa_i M <0
  \quad\text{for $i=1,2$},
\]
and both components $x_1(t)$ and $x_2(t)$ of the solution
remain positive and bounded above by $M$ for all $t>0$.
By the standard theory of delay differential equations,
it follows that equation~\eqref{deq_competition} is well-posed.%

We  discuss
the existence and stability of equilibria
in Sections~\ref{sec_competition_equilibria}--\ref{sec_competition_global}.
Adaptive dynamics for this model is discussed
in Section~\ref{sec_competition_adaptive}.

\subsection{Equilibria}
\label{sec_competition_equilibria}

For each species, the survival threshold is defined by
\begin{equation}\label{R0i}
  \R^{(i)}
  =\frac{\gamma_i e^{-\mu_i \tau_i}}{\mu_i},
  \quad\mbox{for}\;\;i=1,2.
\end{equation}
There are four possible equilibria:
the extinction equilibrium $E_0=(0,0)$;
the semi-trivial equilibrium $E_1=(x^*_1,0)$ exists if $\R^{(1)}>1$;
$E_2=(0,x^*_2)$ exists if $\R^{(2)}>1$;
and the coexistence equilibrium $E_c=(x_1^c,x_2^c)$ exists under certain conditions that we will discuss later.

For the semi-trivial equilibrium $E_i$, $i=1,2$, $x^*_i$ satisfies
\begin{equation}\label{eq:x*}
\gamma_i e^{-\tau_i(\mu_i+\kappa_ix^*_i)} =\mu_i+ \kappa_ix^*_i.
\end{equation}
The components of $E_c$ satisfy
\begin{equation}\begin{aligned}\label{eq:Ec}
  &\gamma_1 e^{-\tau_1(\mu_1+\kappa_1x_1^c+\alpha_2 x_2^c)} 
  =\mu_1+ \kappa_1x_1^c+\alpha_2 x_2^c,\\
  &\gamma_2 e^{-\tau_2(\mu_2+\kappa_2x_2^c+\alpha_1 x_1^c)} 
  =\mu_2+\kappa_2x_2^c+\alpha_1 x_1^c.
\end{aligned}\end{equation}
The first equation in \eqref{eq:Ec}
implies that the quantity $\kappa_1x_1^c+\alpha_2 x_2^c$
satisfies the equation for $\kappa_1x_1^*$ in \eqref{eq:x*}.
Similarly, the second equation in \eqref{eq:Ec}
implies that the quantity $\kappa_2x_2^c+\alpha_1 x_1^c$
satisfies the equation for $\kappa_2x_2^*$ in \eqref{eq:x*}.
Hence,
\begin{equation}\label{eq:Ec1}\begin{aligned}
  &\kappa_1x_1^c+\alpha_2 x_2^c=\kappa_1 x^*_1,\\
  &\alpha_1 x_1^c+\kappa_2 x_2^c= \kappa_2 x^*_2.
\end{aligned}\end{equation}
When $\kappa_1\kappa_2-\alpha_1\alpha_2=0$,
system \eqref{eq:Ec1} has a line of solutions in the $x_1$-$x_2$ plane.
We  ignore this marginal case.
In the case that $\kappa_1\kappa_2-\alpha_1\alpha_2\ne 0$,
system \eqref{eq:Ec1} has a unique nonzero solution
\begin{equation}\notag
  x_1^c
  =\kappa_2\frac{\kappa_1x^*_1-\alpha_2 x^*_2}
  {\kappa_1\kappa_2-\alpha_1\alpha_2},\quad
  x_2^c
  =\kappa_1\frac{\kappa_2x^*_2-\alpha_1 x^*_1}
  {\kappa_1\kappa_2-\alpha_1\alpha_2}.
\end{equation}
Consequently, the coexistence equilibrium $E_c$
exists when $\R^{(1)}>1$, $\R^{(2)}>1$
and either the {\em weak interspecific competition condition}
\begin{equation}\label{cond_Hs}\tag{$\mathrm{H_S}$}
  \kappa_1\kappa_2>\alpha_1\alpha_2,\;
  \kappa_1x^*_1>\alpha_2 x^*_2,
  \;\;\;\text{and}\;\;\;
  \kappa_2x^*_2>\alpha_1 x^*_1
\end{equation}
or the {\em strong interspecific competition condition}
\begin{equation}\label{cond_Hu}\tag{$\mathrm{H_U}$}
  \kappa_1\kappa_2<\alpha_1\alpha_2,\;\;
  \kappa_1x^*_1<\alpha_2 x^*_2,
  \;\;\;\text{and}\;\;\;
  \kappa_2x^*_2<\alpha_1 x^*_1
\end{equation}
holds.
In Section~\ref{sec_competition_local}
we will see that $E_c$
is locally asymptotically stable if \eqref{cond_Hs} holds
and is unstable if \eqref{cond_Hu} holds.

\subsection{Local Stability}
\label{sec_competition_local}

The criteria of local stability of
all possible nonnegative equilibria of \eqref{deq_competition}
are listed as follows.

\begin{proposition}
\label{prop_competition_local}
Consider system~\eqref{deq_competition}.
Let $E_0$, $E_1$, $E_2$ and $E_c$
be the possible equilibria
given in Section~\ref{sec_competition_equilibria}.
\begin{enumerate}[(a)]
  \item $E_0$
  is locally asymptotically stable if
  $\R^{(1)}<1$ and $\R^{(2)}<1$;
  unstable if either $\R^{(1)}>1$ or $\R^{(2)}>1$.
  \item If $E_1$ exists,
  it is locally asymptotically stable if
  $\alpha_1 x_1^*>\kappa_2x_2^*$;
  unstable if $\alpha_1 x_1^*<\kappa_2x_2^*$
  \item If $E_1$ exists,
  it is locally asymptotically stable if
  $\alpha_2 x_2^*>\kappa_1x_1^*$;
  unstable if $\alpha_2 x_2^*<\kappa_1x_1^*$
  \item If a unique $E_c$ exists,
  it is locally asymptotically stable if
  \eqref{cond_Hs} holds  and unstable if \eqref{cond_Hu} holds.
\end{enumerate}
\end{proposition}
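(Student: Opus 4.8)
The plan is to determine the stability of each equilibrium from its characteristic equation, obtained by linearizing \eqref{deq_competition} and running the same Laplace-transform computation used in the proof of Lemma~\ref{lem_local_single}; the distributed-delay terms again contribute factors of the form $m_i\,(e^{-\lambda\tau_i}-1)/\lambda$, where for each equilibrium I write $m_i$ for the common value of the two sides of the relevant identity in \eqref{eq:x*} or \eqref{eq:Ec}. For $E_0=(0,0)$ the bilinear coupling terms vanish to first order, so the linearization decouples and the characteristic equation is the product of the two scalar factors $\gamma_i e^{-\mu_i\tau_i}e^{-\lambda\tau_i}-\mu_i-\lambda=0$, each of the form \eqref{char_x0}. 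Hayes' Theorem \cite{Hayes1950} then gives all roots with negative real part exactly when $\gamma_i e^{-\mu_i\tau_i}<\mu_i$, i.e.\ $\R^{(i)}<1$, and a root with positive real part when $\R^{(i)}>1$, proving (a).

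For $E_1=(x_1^*,0)$ the face $\{x_2=0\}$ is invariant, and the dynamics on it is exactly the single-species equation \eqref{deq_single} for species~$1$. Moreover the $\varphi_2$-component of the linearization contains no $\varphi_1$ (every $x_1$-variation in the second equation is multiplied by the equilibrium value $x_2=0$), so the linearized system is block triangular. The longitudinal block reproduces the characteristic equation of the single-species positive equilibrium, all of whose roots have negative real part by Lemma~\ref{lem_local_single}(b). The transverse block is the scalar equation $\gamma_2 e^{-\tau_2(\mu_2+\alpha_1 x_1^*)}e^{-\lambda\tau_2}-(\mu_2+\alpha_1 x_1^*)-\lambda=0$, again of the form \eqref{char_x0}, hence by Hayes' Theorem stable iff $\gamma_2 e^{-\tau_2(\mu_2+\alpha_1 x_1^*)}<\mu_2+\alpha_1 x_1^*$. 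Since $y\mapsto \gamma_2 e^{-\tau_2(\mu_2+y)}-(\mu_2+y)$ is strictly decreasing and vanishes at $y=\kappa_2 x_2^*$ by \eqref{eq:x*}, this is equivalent to $\alpha_1 x_1^*>\kappa_2 x_2^*$, giving (b). Part (c) (for $E_2=(0,x_2^*)$) follows by interchanging the indices.

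The main work is (d). Linearizing \eqref{deq_competition} at $E_c$ and substituting $\varphi_i=v_i e^{\lambda t}$, the reduction that produced \eqref{lambda_m} now operates in each equation separately: using the identities \eqref{eq:Ec} to merge the local quadratic terms with the delayed and distributed-delay contributions, I expect each linearized equation to collapse to $(1-q_i(\lambda))\bigl[(\lambda+\kappa_i x_i^c)v_i+\alpha_{3-i}x_i^c v_{3-i}\bigr]=0$, where $q_i(\lambda)=m_i\,(e^{-\lambda\tau_i}-1)/\lambda$. Requiring a nontrivial $(v_1,v_2)$ then factors the characteristic equation as $(1-q_1(\lambda))(1-q_2(\lambda))\,D(\lambda)=0$ with $D(\lambda)=(\lambda+\kappa_1 x_1^c)(\lambda+\kappa_2 x_2^c)-\alpha_1\alpha_2 x_1^c x_2^c$. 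The factors $q_i(\lambda)=1$ contribute only roots with negative real part, by the same estimate used after \eqref{lambda_m} in the proof of Lemma~\ref{lem_local_single}. Thus stability is decided by the quadratic $D$, whose coefficients are $\kappa_1 x_1^c+\kappa_2 x_2^c>0$ and $(\kappa_1\kappa_2-\alpha_1\alpha_2)x_1^c x_2^c$; by the Routh--Hurwitz criterion both roots have negative real part iff $\kappa_1\kappa_2>\alpha_1\alpha_2$, and one root is positive iff $\kappa_1\kappa_2<\alpha_1\alpha_2$. Since \eqref{cond_Hs} forces $\kappa_1\kappa_2>\alpha_1\alpha_2$ and \eqref{cond_Hu} forces $\kappa_1\kappa_2<\alpha_1\alpha_2$, this yields (d).

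I expect the one delicate step to be verifying the clean factorization at $E_c$: the cancellation of the common factor $1-q_i(\lambda)$ from each linearized equation depends on the equilibrium identities \eqref{eq:Ec} combining the delayed term, the distributed-delay term, and the local quadratic terms in exactly the right proportions, mirroring the single-species computation leading to \eqref{lambda_m}. Once that cancellation is confirmed, the remaining argument is the elementary quadratic/Routh--Hurwitz step, and the stability of the transcendental factors $1-q_i(\lambda)=0$ is inherited verbatim from the single-species analysis.
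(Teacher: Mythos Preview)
Your proposal is correct and follows essentially the same approach as the paper: linearize, observe that at $E_0$ and $E_1,E_2$ the characteristic equation is (block) triangular with factors of Hayes type and the single-species factor from Lemma~\ref{lem_local_single}, and at $E_c$ factor the characteristic equation as $(q_1-1)(q_2-1)D(\lambda)=0$ with $D$ the quadratic you wrote. Your row-by-row observation that each linearized equation at $E_c$ already carries the scalar factor $q_i(\lambda)-1$ is exactly what underlies what the paper calls ``a straightforward computation,'' so the ``delicate step'' you flag does go through cleanly once the equilibrium identities \eqref{eq:Ec} are used.
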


The results of this proposition are summarized in Table~\ref{table_competition}.

\begin{table}[hbtp]
\caption{Criteria of the existence and local stability
of all possible nonnegative equilibria of system~\eqref{deq_competition}.}
\centering
\begin{tabular}{p{6em}cp{10em}}
  \hline
  Equilibrium  &  Existence  & Stability Conditions
  \\
  \hline
  \\[-1em]
  $E_0=(0,0)$  &  Always &
  $\R^{(1)}<1$ and $\R^{(2)}<1$
  \\[.2em]
  $E_1=(x_1^*,0)$  & $\R^{(1)}>1$  &  $\alpha_1 x_1^*>\kappa_2x_2^*$
  \\[.2em]
  $E_2=(0,x_2^*)$  & $\R^{(2)}>1$  &  $\alpha_2 x_2^*>\kappa_1x_1^*$
  \\[.2em]
  $E_c=(x_c,y_c)$
  &
  \parbox{13em}{\centering
    $\R^{(1)}>1,\ \R^{(2)}>1$ and
    \\
    either \eqref{cond_Hs} or \eqref{cond_Hu} holds
  }
  &
  \eqref{cond_Hs}
  \\[.8em]
  \hline
\end{tabular}
\label{table_competition}
\end{table}

\begin{proof}
Let $E=(\bar{x}_{1},\bar{x}_{2})$ be an equilibrium of \eqref{deq_competition}.
Then the corresponding Jacobian matrix at $E$ is
\begin{equation}\label{ME_x12}\begin{aligned}
   M(E)=&\begin{pmatrix}
    \gamma_1 e^{-\lambda\tau_1-\tau_1m_1}
    -\mu_1-2\kappa_1\bar{x}_{1}-\alpha_2\bar{x}_{2}
    & -\alpha_2\bar{x}_{1}\\
    -\alpha_1\bar{x}_{2}
    &\gamma_2 e^{-\lambda\tau_2-\tau_2m_2}
    -\mu_2-\alpha_1\bar{x}_{1}- 2\kappa_2\bar{x}_{2}
  \end{pmatrix}\\[.5em]
  &\quad+\begin{pmatrix}
    \gamma_1e^{-\tau_1m_1}\frac{e^{-\lambda\tau_1}-1}{\lambda}\kappa_1\bar{x}_1
    &\gamma_1e^{-\tau_1m_1}\frac{e^{-\lambda\tau_1}-1}{\lambda}\alpha_2\bar{x}_1\\[.5em]
    \gamma_2e^{-\tau_2m_2}\frac{e^{-\lambda\tau_2}-1}{\lambda}\alpha_1\bar{x}_2
    & \gamma_2e^{-\tau_2m_2}\frac{e^{-\lambda\tau_2}-1}{\lambda}\kappa_2\bar{x}_2
  \end{pmatrix}-\lambda \begin{pmatrix} 1& 0\\ 0&1  \end{pmatrix}
\end{aligned}\end{equation}
where \begin{equation}\label{def_m12}\begin{aligned}
  m_1=\mu_1+\kappa_1\bar{x}_{1}+\alpha_2\bar{x}_{2},\quad
  m_2=\mu_2+\alpha_1\bar{x}_{1}+\kappa_2\bar{x}_{2}.
\end{aligned}\end{equation}

At the equilibrium $E_0=(0,0)$,
\begin{equation}\notag\begin{aligned}
  M(E_0)
  &=\det\begin{pmatrix}
    \gamma_1 e^{-\lambda\tau_1-\tau_1\mu_1} -\mu_1-\lambda
    & 0\\
    0
    &\gamma_2 e^{-\lambda\tau_2-\tau_2\mu_2} -\mu_2-\lambda
  \end{pmatrix}.\\[.5em]
\end{aligned}\end{equation}
The eigenvalues of $M(E_0)$ are $\mu_1\R^{(1)}$ and $\mu_2\R^{(2)}$.
Hence $E_0$ is locally asymptotically stable
if and only if $\R^{(1)}<1$ and $\R^{(2)}<1$,
and is unstable if $\R^{(1)}>1$ or $\R^{(2)}> 1$,

When $\R^{(1)}>1$, the semi-trivial equilibrium $E_1=(x_1^*,0)$ exists.
From \eqref{eq:x*}, the number $m_1$ defined in \eqref{def_m12} satisfies
\[
  m_1= \gamma_1e^{-\tau_1m_1}.
\]
Hence equation~\eqref{ME_x12} gives
\begin{equation}\notag\begin{aligned}
  M(E_1)
  &=\begin{pmatrix}
   m_1 e^{-\lambda\tau_1}
    -\mu_1-2\kappa_1x_{1}^*
    &\;\; -\alpha_2x_{1}^*\\
    0
    &\;\;\gamma_2 e^{-\lambda\tau_2-\tau_2m_2}
    -\mu_2-\alpha_1x_{1}^*
  \end{pmatrix}\\[.5em]
  &\qquad
  +\begin{pmatrix}
    m_1\frac{e^{-\lambda\tau_1}-1}{\lambda}\kappa_1x_1^*
    &\;\;\gamma_1e^{-\tau_1m_1}\frac{e^{-\lambda\tau_1}-1}{\lambda}\alpha_2x_1^*\\[.5em]
    0
    &0
  \end{pmatrix}
-\lambda \begin{pmatrix} 1& 0\\ 0&1  \end{pmatrix}.
\end{aligned}\end{equation}
Since $M(E_1)$ is a upper triangular matrix, the roots for $\det(M(E_1))=0$
are \[
  \lambda_1=m_1\left[e^{-\lambda\tau_1}+\frac{e^{-\lambda\tau_1}-1}{\lambda}\kappa_1x_1^* \right]-\mu_1-2\kappa_1x_{1}^*
\] and \[
  \lambda_2
  =\gamma_2 e^{-\lambda\tau_2-\tau_2m_2}
  -\mu_2-\alpha_1x_{1}^*.
\]
The real part of $\lambda_1$ is negative
by the proof of Lemma \ref{lem_local_single}(b).
Hence $E_1$ is locally asymptotically stable
if and only if the real part of $\lambda_2$ is negative, that is \[
  \gamma_2 e^{-\tau_2(\mu_2+\alpha_1x_1^*)}
  -(\mu_2+ \alpha_1x_{1}^*)<0,
\] or \[
  \alpha_1 x_1^*>\kappa_2x_2^*.
\]
Similarly, the equilibrium $E_2$ is locally asymptotically stable
if and only if \[
  \alpha_2 x_2^*>\kappa_1 x_1^*.
\]

When $\R^{(1)},\ \R^{(2)}>1$,
the coexistence equilibrium, $E_c=(x_1^c,x_2^c)$.
From \eqref{eq:Ec},
the numbers $m_1$ and $m_2$ defined in \eqref{def_m12} satisfy
\begin{equation}\notag\begin{aligned}
  m_1=\gamma_1e^{-\tau_1m_1},\quad
  m_2=\gamma_2e^{-\tau_2m_2}.
\end{aligned}\end{equation}
Hence equation~\eqref{ME_x12} gives
\begin{equation}\notag\begin{aligned}
  M(E^c)
  &=\begin{pmatrix}
    -m_1-\kappa_1x_{1}^*
    & -\alpha_2x_{1}^*\\
    -\alpha_1x_{2}^*
    &-m_2-\kappa_2x_{2}^*
  \end{pmatrix}
  +\begin{pmatrix}
    e^{-\lambda\tau_1}m_1&0\\
    0& e^{-\lambda\tau_2}m_2
  \end{pmatrix}
  \\[.5em]
  &\qquad
  +\begin{pmatrix}
    \frac{e^{-\lambda\tau_1}-1}{\lambda}m_1\kappa_1x_1^*
    & \;\;\frac{e^{-\lambda\tau_1}-1}{\lambda}m_1\alpha_2x_1^*\\[.5em]
    \frac{e^{-\lambda\tau_2}-1}{\lambda}m_2\alpha_1x_2^*
    & \;\;\frac{e^{-\lambda\tau_2}-1}{\lambda}m_2\kappa_2x_2^*
  \end{pmatrix}-\lambda \begin{pmatrix} 1& 0\\ 0& 1  \end{pmatrix}.
\end{aligned}\end{equation}
By a straightforward computation,
the characteristic equation $\det(M(E^c))=0$ can be written as
\begin{equation}\notag\begin{aligned}
  \Big( m_1 \frac{e^{-\lambda\tau_1}-1}{\lambda}-1 \Big)\Big( m_2 \frac{e^{-\lambda\tau_2}-1}{\lambda}-1 \Big)\Big[(\lambda+\kappa_1 x_1^c)(\lambda+\kappa_2 x_2^c)-\alpha_1\alpha_2 x_1^cx_2^c\Big]=0
\end{aligned}\end{equation}
Note that the terms $\left(m_i \frac{e^{-\lambda\tau_i}-1}{\lambda}-1\right)$, $i=1,2$,
as shown in the proof of Lemma~\ref{lem_local_single},
have no root with non-negative real parts,
so all roots of $\det(M(E^c))=0$
with non-negative real parts are roots of the quadratic equation
\[
  \lambda^2
  +(\kappa_1x_1^c+\kappa_2 x_2^c)\lambda
  +x_1^c x_2^c(\kappa_1\kappa_2
  -\alpha_1\alpha_2)=0.
\]
Since $\kappa_1x_1^c+\kappa_2 x_2^c>0$,
this has no roots with positive real parts if
$\kappa_1\kappa_2>\alpha_1\alpha_2$,
and has exactly one root with a positive real part if
$\kappa_1\kappa_2<\alpha_1\alpha_2$.
Therefore, we conclude  that when $E_c$ exits, it 
is locally asymptotically stable when
there is weak competition between the competitors,
and it is unstable with one dimensional stable manifold
under the condition for strong competition.%
\qed
\end{proof}

\subsection{Some Results on Global Dynamics}
\label{sec_competition_global}

In this section we study the global dynamics of system \eqref{deq_competition}.
First we show that a necessary condition for the $i$th species
to survive is $\R^{(i)}>1$,
where $i=1,2$ and $\R^{(i)}$ is defined by \eqref{R0i}.

\begin{theorem}
\label{thm_R0i_small}
If $\R^{(i)}\leq1$, where $i\in \{1,2\}$, then $\lim_{t\to\infty}x_i(t)=0$.
\end{theorem}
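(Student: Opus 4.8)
The plan is to mimic the argument used in part~(a) of Theorem~\ref{thm_single}, reducing the two-species problem to a comparison with the single-species dynamics. First I would fix $i\in\{1,2\}$ with $\R^{(i)}\le 1$ and observe that, because all terms in the exponential of the $x_i$-equation in \eqref{deq_competition} carry a minus sign, the growth factor can be bounded crudely: $\exp(-\mu_i\tau_i-\int_{t-\tau_i}^t(\kappa_ix_i+\alpha_j x_j)\,ds)\le e^{-\mu_i\tau_i}$, where $j\ne i$. This is precisely the inequality \eqref{eq:well-posed_2} already recorded in the excerpt, so every positive solution satisfies $x_i'(t)\le \gamma_i e^{-\mu_i\tau_i}x_i(t-\tau_i)-\mu_i x_i(t)-\kappa_i x_i^2(t)$. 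The right-hand side is exactly the right-hand side of the single-species well-posedness inequality \eqref{eq:well-posed}, and it does not depend on $x_j$ at all, so the coupling between the species is eliminated by this one estimate.

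Next I would run the fluctuation-lemma argument from Theorem~\ref{thm_single}(a) directly on $x_i$. Set $\hat{x}_i=\limsup_{t\to\infty}x_i(t)$ and suppose, for contradiction, that $\hat{x}_i>0$. By the fluctuation lemma \cite{Hirsch1985} there is an increasing sequence $t_n\to\infty$ with $x_i(t_n)\to\hat{x}_i$ and $x_i'(t_n)=0$. Applying the inequality above at $t=t_n$ gives $0\le \gamma_i e^{-\mu_i\tau_i}x_i(t_n-\tau_i)-\mu_i x_i(t_n)-\kappa_i x_i^2(t_n)$, and taking the limit superior as $n\to\infty$ (using $x_i(t_n-\tau_i)\le \hat{x}_i+o(1)$) yields
\[
  0\le \gamma_i e^{-\mu_i\tau_i}\hat{x}_i-\mu_i\hat{x}_i-\kappa_i\hat{x}_i^2
  =\hat{x}_i\big(\mu_i(\R^{(i)}-1)-\kappa_i\hat{x}_i\big).
\]
Since $\R^{(i)}\le 1$ and $\kappa_i>0$, the factor in parentheses is strictly negative for $\hat{x}_i>0$, contradicting $\hat{x}_i>0$. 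Hence $\hat{x}_i=0$, which together with nonnegativity of $x_i$ forces $\lim_{t\to\infty}x_i(t)=0$.

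I expect the argument to be essentially routine, since the key decoupling estimate is already in hand; the only point requiring mild care is the handling of the delayed term $x_i(t_n-\tau_i)$ in the limit, which is controlled by $\limsup_{t\to\infty}x_i(t)=\hat{x}_i$ so that $\limsup_n x_i(t_n-\tau_i)\le\hat{x}_i$. One subtlety worth flagging is the boundary case $\R^{(i)}=1$: there the bracketed factor is $-\kappa_i\hat{x}_i<0$ rather than something involving $\R^{(i)}-1$, so the contradiction still holds and no separate treatment of equality is needed, exactly as in Theorem~\ref{thm_single}(a). The main (and only real) obstacle is to justify that the crude bound \eqref{eq:well-posed_2} suffices despite discarding all interspecific information; this is legitimate precisely because extinction of a single species is a one-sided statement for which an upper comparison equation is all that is required.
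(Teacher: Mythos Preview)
Your argument is correct and follows essentially the same route as the paper: bound the $x_i$-equation above by the decoupled inequality \eqref{eq:well-posed_2} and then run a fluctuation-lemma contradiction exactly as in Theorem~\ref{thm_single}(a). The paper's own proof differs only cosmetically, splitting into the two subcases ``$x_i$ converges to a positive limit'' and ``$x_i$ does not converge'' before reaching the same contradiction; your unified treatment is slightly cleaner, but note that the fluctuation lemma in \cite{Hirsch1985} yields $x_i'(t_n)\to 0$ rather than $x_i'(t_n)=0$ in general (the exact equality requires non-convergence so that genuine local maxima exist), so either replace $=0$ by $\to 0$ throughout or insert the paper's case split to make the statement literally accurate.
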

\begin{proof}
We consider only the case $\R^{(1)}\leq1$
since the case $\R^{(2)}\leq1$ can be treated similarly.
We proceed by proof by contradiction. Suppose that $x_1(t)$ does not converge to $0$.
Denote $\bar{x}_1=\limsup_{t\to\infty}x_1(t)>0$.
Then either (i) $x_1(t)$ converges to $\bar{x}_1$
or (ii)
there exists an increasing sequence of times $\{t_n\}$ such that
$x_1'(t_n)=0$ and $\lim_{n\to\infty}x_1(t_n)=\bar{x}_1$.

	In case (i), for any $\varepsilon>0$
we have $\bar{x}_1-\varepsilon< x_1(t)< \bar{x}_1+\varepsilon$
for all sufficiently large $t$.
From \eqref{deq_competition} it follows that \begin{equation}\notag
  x_1'(t) <
  \gamma_1 (\bar{x}_1+\varepsilon) e^{-\mu_1\tau_1-(\bar{x}_1-\varepsilon)\tau_1}
  - \mu_1(\bar{x}_1-\varepsilon).
\end{equation}
Note that as $\varepsilon\to 0$
the right-hand side of the inequality converges to
\[\begin{aligned}
  \gamma_1\bar{x}_1e^{-\mu_1\tau_1-\bar{x}_1\tau_1}-\mu_1\bar{x}_1
  &< \gamma_1\bar{x}_1e^{-\mu_1\tau_1}-\mu_1\bar{x}_1
  \\
  &=\mu_1\bar{x}_1(\mathcal{R}_0^{(1)}-1)
  \le 0.
\end{aligned}\]
This implies that  $\limsup x_1'(t)<0$, contradicting 
the assumption that $x_1(t)$ converges to $\bar{x}_1$.

	In case (ii),
\[\begin{aligned}
    0&=x_1'(t_n)\\
    &=\gamma_1 x_1(t_n-\tau_1)\exp\left(
        -\mu_1\tau_1-\kappa_1\int_{t_n-\tau_1}^{t_n}x_1(s)+\alpha_1 x_2(s)ds
    \right)
    \\
    &\qquad
     -\mu_1 x_1(t_n) - \kappa_1x_1^2(t_n)-\alpha_2 x_1(t_n)x_2(t_n)
     \\
    &<\gamma_1 x_1(t_n-\tau_1)\exp\left(-\mu_1\tau_1\right) 
    -\mu_1 x_1(t_n) - \kappa_1x_1^2(t_n)-\alpha_1 x_1(t_n)x_2(t_n).
\end{aligned}
\]
As $n\to\infty$,
\[\begin{aligned}
    0
    &\leq \gamma_1 e^{-\mu_1\tau_1} \limsup_{n\to\infty }x_1(t_n-\tau_1)
    -\mu_1 \bar{x}_1-\kappa_1\bar{x}_1^2
    -\alpha_1\bar{x}_1  \liminf_{n\to\infty }x_2(t_n)
    \\
    &\leq \gamma_1 e^{-\mu_1\tau_1} \bar{x}_1
    -\mu_1 \bar{x}_1- \kappa_1\bar{x}_1^2
    -\alpha_1\bar{x}_1  \liminf_{n\to\infty }x_2(t_n)\\
    &=\bar{x}_1\left[ \gamma_1 e^{-\mu_1\tau_1}
    -\mu_1 \right]- \kappa_1\bar{x}_1^2
    -\alpha_1\bar{x}_1  \liminf_{n\to\infty }x_2(t_n) <0,
\end{aligned}\]
 a contradiction. Thus $x_1(t)$ converges to 0.
	\qed
\end{proof}

From this theorem, we conclude that if $\R^{(i)}\leq 1$, then $x_i$ converges to 0, and  system \eqref{deq_competition} reduces to
the single species model \eqref{deq_single} studied in Section~\ref{sec_single}. 
Therefore,
when at least one of $\R^{(1)}$ and $\R^{(2)}$ is less than 1,
we have the following results.
\begin{corollary}
Consider system \eqref{deq_competition}.
Then the following assertions hold
for any positive solution $(x_1(t),x_2(t))$.
\begin{enumerate}[(a)]
  \item
  If $\R^{(1)}\leq1$ and $\R^{(2)}\leq1$,
  then $(x_1(t),x_2(t))$ converges to $E_0$.
  \item
  If $\R^{(1)}>1$ and $\R^{(2)}\leq1$,
  then $(x_1(t),x_2(t))$ converges to $E_1$.
  \item
	  If $\R^{(1)}\leq1$ and $\R^{(2)}<1$,
  then $(x_1(t),x_2(t))$ converges to $E_2$.
\end{enumerate}
\end{corollary}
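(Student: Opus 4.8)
The plan is to reduce every case to the single-species results of Section~\ref{sec_single}. Assertion (a) is immediate: when $\R^{(1)}\le 1$ and $\R^{(2)}\le 1$, Theorem~\ref{thm_R0i_small} applied to each index gives $x_1(t)\to 0$ and $x_2(t)\to 0$, so $(x_1(t),x_2(t))\to(0,0)=E_0$. Assertions (b) and (c) are mirror images of one another, so I would prove (b) and obtain (c) by interchanging the indices; note that for $E_2=(0,x_2^*)$ to exist the hypothesis in (c) must read $\R^{(2)}>1$, so the printed inequality $\R^{(2)}<1$ should be $\R^{(2)}>1$.

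For (b), assume $\R^{(1)}>1$ and $\R^{(2)}\le 1$. Theorem~\ref{thm_R0i_small} first yields $x_2(t)\to 0$, and it remains to prove that $x_1(t)\to x_1^*$, the positive root of \eqref{eq:x*}. The key observation is that, once $x_2(t)\to 0$, the first equation of \eqref{deq_competition} is an asymptotically autonomous perturbation of the single-species model \eqref{deq_single} written with the subscript-$1$ parameters,
\begin{equation}\notag
  x_1'(t)=\gamma_1 x_1(t-\tau_1)\exp\!\Big(-\mu_1\tau_1-\kappa_1\!\int_{t-\tau_1}^{t}x_1(s)\,ds\Big)-\mu_1 x_1(t)-\kappa_1 x_1^2(t),
\end{equation}
because the factor $\exp(-\alpha_2\int_{t-\tau_1}^t x_2(s)\,ds)$ tends to $1$ and the term $\alpha_2 x_1(t)x_2(t)$ tends to $0$ uniformly along the bounded orbit.

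I would then invoke the theory of chain transitive sets, as in the proof of Theorem~\ref{thm_single} and in \cite{Hirsch2001}. By well-posedness the orbit is bounded, so the history segments $\{x_{1,t}\}_{t\ge 0}\subset C([-\tau_1,0],\mathbb R_+)$ have a nonempty, compact, connected omega-limit set $\omega$, and $\omega$ is internally chain transitive for the semiflow generated by the limiting equation displayed above. By Theorem~\ref{thm_single}(b), that semiflow is convergent, with exactly two equilibria, $0$ and $x_1^*$, the latter attracting every positive solution; since the only connection runs from the unstable state $0$ to $x_1^*$ and cannot be reversed, the sole internally chain transitive sets are the singletons $\{0\}$ and $\{x_1^*\}$. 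As $\omega$ is connected it must equal one of these two, so $x_1(t)$ converges either to $0$ or to $x_1^*$.

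The main obstacle is to exclude $\omega=\{0\}$, that is, to show that species $1$ persists whenever $\R^{(1)}>1$. Here the instability of the zero state is essential: by Lemma~\ref{lem_local_single}, the characteristic equation \eqref{char_x0} with the subscript-$1$ parameters has a positive real root. To convert this into a contradiction I would argue as follows. Suppose $x_1(t)\to 0$; since also $x_2(t)\to 0$, choose $\varepsilon>0$ so small that $c:=\gamma_1 e^{-\mu_1\tau_1-(\kappa_1+\alpha_2)\varepsilon\tau_1}>\mu_1+(\kappa_1+\alpha_2)\varepsilon=:d$ (possible because $c\to\mu_1\R^{(1)}>\mu_1$ and $d\to\mu_1$ as $\varepsilon\to0$), and pick $T$ with $x_1(t),x_2(t)<\varepsilon$ for $t\ge T$. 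Then for $t\ge T+\tau_1$ the first equation of \eqref{deq_competition} gives the lower bound
\begin{equation}\notag
  x_1'(t)\ \ge\ c\,x_1(t-\tau_1)-d\,x_1(t).
\end{equation}
The right-hand side defines a linear cooperative delay equation whose characteristic equation $ce^{-\lambda\tau_1}-d-\lambda=0$ has a positive real root since $c>d$; by the comparison theorem for cooperative delay equations (\cite[Theorem~5.1.1]{Smith1995}), $x_1(t)$ dominates a solution of this linear equation with the same positive data on $[T,T+\tau_1]$, which grows without bound. This contradicts $x_1(t)\to 0$ (indeed it contradicts boundedness), so $\omega=\{x_1^*\}$ and hence $x_1(t)\to x_1^*$. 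Combined with $x_2(t)\to 0$ this gives $(x_1(t),x_2(t))\to(x_1^*,0)=E_1$, proving (b); the same argument with the indices exchanged proves (c). I expect the persistence step to be the only substantive point, the rest being direct applications of Theorems~\ref{thm_R0i_small} and~\ref{thm_single}.
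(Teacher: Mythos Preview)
Your proposal is correct and follows the same route the paper takes: use Theorem~\ref{thm_R0i_small} to kill the component with $\R^{(i)}\le 1$, then reduce to the single-species model~\eqref{deq_single} and invoke Theorem~\ref{thm_single}. The paper, however, does not spell this out; its entire justification is the sentence preceding the corollary (``\dots system~\eqref{deq_competition} reduces to the single species model~\eqref{deq_single}\dots''). You have supplied what the paper omits: the asymptotically-autonomous/chain-transitive-set argument making the word ``reduces'' precise, and the explicit persistence step ruling out $\omega=\{0\}$ via comparison with the linear cooperative DDE $y'(t)=c\,y(t-\tau_1)-d\,y(t)$. Both additions are sound. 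You also correctly note the typo in part~(c): the hypothesis must be $\R^{(2)}>1$ for $E_2$ to exist.
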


From the local stability analysis in Section~\ref{sec_competition_local}
and the simulations shown later in this section,
we propose the following  conjectures concerning the global dynamics of system~\eqref{deq_competition}.

\begin{figure}[hbtp]\centering
{\includegraphics[trim = 1cm .2cm .5cm 0cm, clip,width=\textwidth]{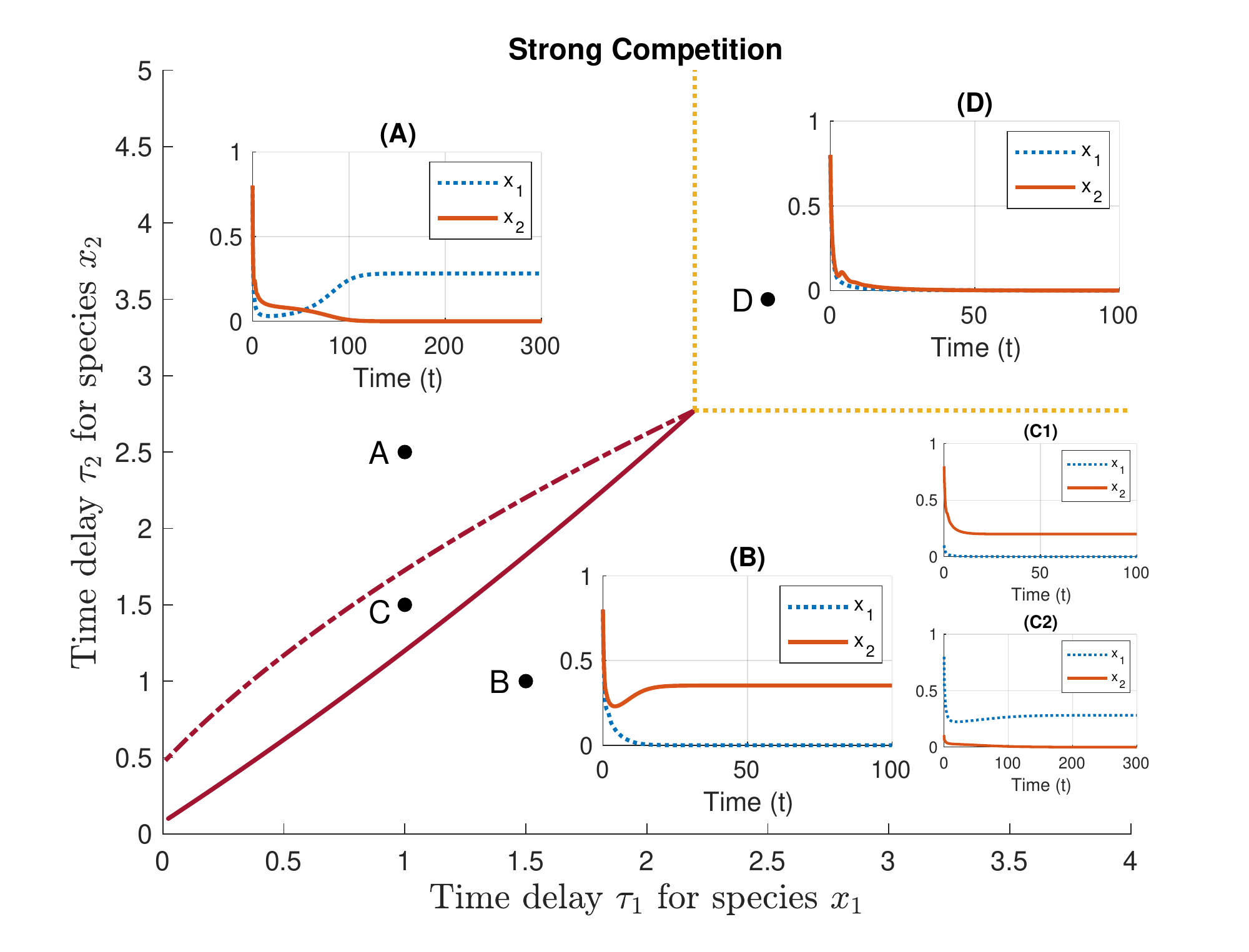}}
\caption{%
Bifurcation diagrams for system~\eqref{deq_competition}
with  strong interspecies competition, i.e., $\alpha_1\alpha_2>\kappa_1\kappa_2$.
In region $D$,
the conditions  $\R^{(1)}<1$ and $\R^{(2)}<1$ hold,
and all solutions converge to $E_0$.
Region $C$
is bounded above by the curve $\alpha_1x_1^*=\kappa_2x_2^*$
and bounded below by the curve $\alpha_2x_2^*=\kappa_1x_1^*$.
Condition~\eqref{cond_Hu} is satisfied in this region,
and solutions converge to either $E_1$ or $E_2$.
Regions A and $B$ corresponds to cases (a) and (b) in Conjecture~\ref{conj_competition}.
For $(\tau_1,\tau_2)$ in region $A$,
only $x_1$ survives;
for $(\tau_1,\tau_2)$ in region $B$,
only $x_2$ survives.%
}
\label{fig_strong}
\end{figure}

\begin{figure}[hbtp]\centering
{\includegraphics[trim = 1cm .2cm .5cm 0cm, clip,width=\textwidth]{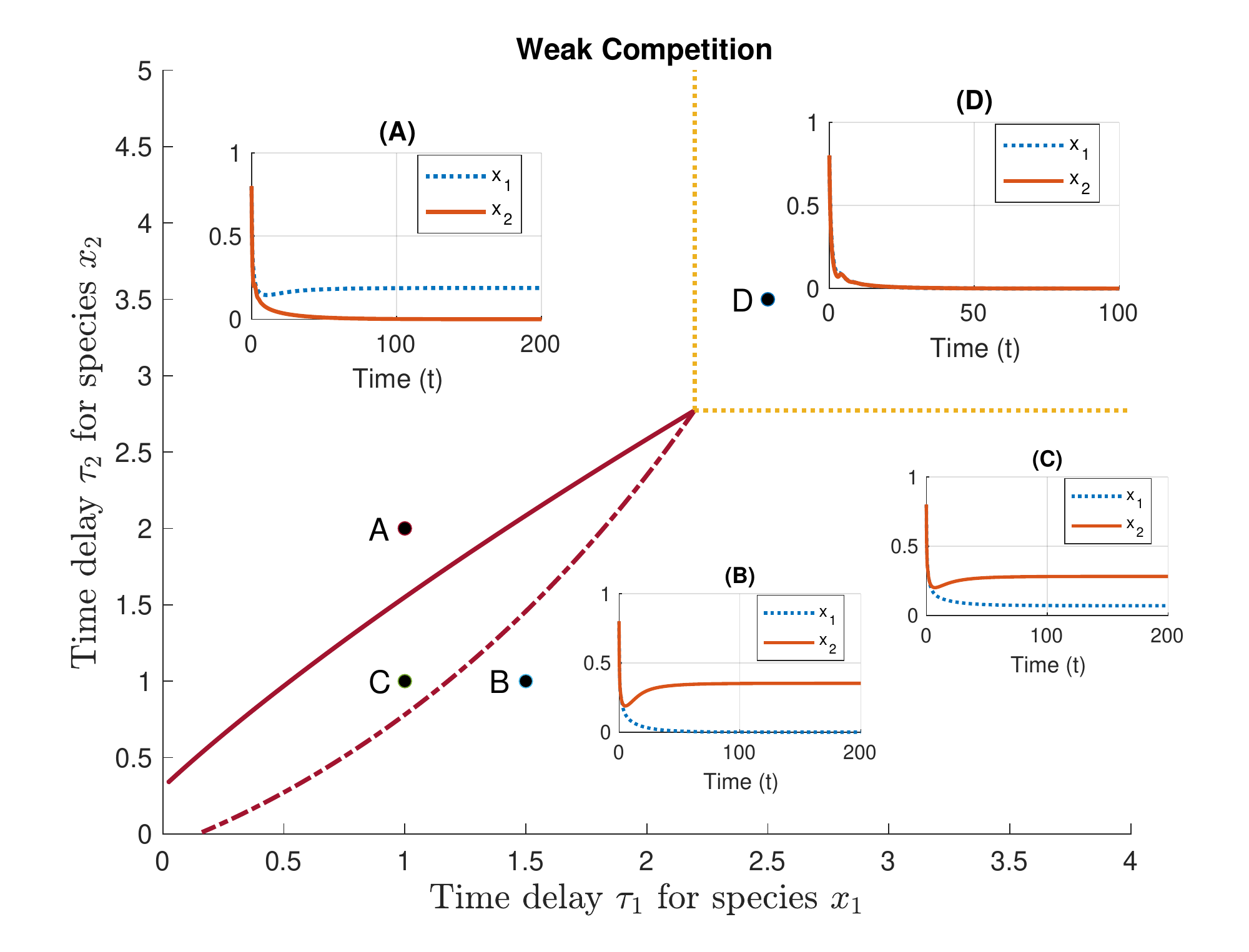}}
\caption{%
Bifurcation diagrams for system~\eqref{deq_competition}
with  weak interspecies competition, i.e.,  $\alpha_1\alpha_2<\kappa_1\kappa_2$.
Regions $A$, $B$ and $D$
are similar to the corresponding regions in Fig.~\ref{fig_strong}.
Region $C$
is bounded above by the curve $\alpha_2x_2^*=\kappa_1x_1^*$
and bounded below by the curve $\alpha_1x_1^*=\kappa_2x_2^*$.
Condition~\eqref{cond_Hs} is satisfied in this region,
and all solutions converge to the coexistence equilibrium $E_c$.%
}
\label{fig_weak}
\end{figure}

\begin{conjecture}
\label{conj_competition}
Consider system~\eqref{deq_competition}.
Assume $\R^{(1)}>1$ and $\R^{(2)}>1$.
Then, for any solution $(x_1(t),x_2(t))$ with
positive initial data, the following assertions hold:
	\begin{enumerate}[{\it (a)}]
  \item
  If $\alpha_1 x_1^*>\kappa_2  x_2^*$ and $\kappa_1 x_1^*> \alpha_2 x_2^*$,
  then $(x_1(t),x_2(t))$ converges to $E_1$.
  \item
  If $\alpha_1 x_1^*<\kappa_2  x_2^*$ and $\kappa_1x_1^*< \alpha_2 x_2^*$,
  then $(x_1(t),x_2(t))$ converges to $E_2$.
  \item
  If \eqref{cond_Hs} holds,
  then $(x_1(t),x_2(t))$ converges to $E_c$.
  \item
  If \eqref{cond_Hu} holds,
  then for any initial data
  that is not on the one-dimensional stable manifold of $E_c$,
  the solution $(x_1(t),x_2(t))$ converges to either $E_1$ or  $E_2$.
\end{enumerate}
\end{conjecture}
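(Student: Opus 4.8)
The plan is to mirror the two-pillar strategy of Theorem~\ref{thm_single} — a Lyapunov-type reduction to a limiting equation with sign-definite delayed feedback, followed by a comparison/$\omega$-limit argument — while adapting both pillars to the coupled, competitive setting. First I would dispose of the degenerate cases. By Theorem~\ref{thm_R0i_small} and its corollary, whenever some $\R^{(i)}\le 1$ the $i$th species is driven to extinction and \eqref{deq_competition} collapses onto the single-species equation \eqref{deq_single}, whose global behaviour is already settled; so throughout I assume $\R^{(1)}>1$ and $\R^{(2)}>1$. Writing $G_i(t)$ for the exponential recruitment factor of species $i$ in \eqref{deq_competition}, I would introduce the analogues of the function used in Theorem~\ref{thm_single},
\[
  Y_1(t)=G_1(t)-\mu_1-\kappa_1 x_1(t)-\alpha_2 x_2(t),\qquad
  Y_2(t)=G_2(t)-\mu_2-\kappa_2 x_2(t)-\alpha_1 x_1(t),
\]
each measuring the gap between delayed per-capita recruitment and instantaneous per-capita loss. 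Because the decay in the exponent of $G_i$ is by design consistent with the instantaneous competition terms, these quantities should again govern the limiting dynamics.

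A direct computation, using $x_i'(t)=G_i(x_i(t-\tau_i)-x_i)+Y_i x_i$, yields
\[
  Y_1'(t)=-\kappa_1 x_1 Y_1-\alpha_2 x_2 Y_2
  +\alpha_2\big[\,G_1(x_2(t-\tau_1)-x_2)-G_2(x_2(t-\tau_2)-x_2)\,\big],
\]
together with the symmetric identity for $Y_2'$. In the single-species case the analogous bracket vanishes identically and $Y$ decays monotonically to zero, which is exactly what permits the restriction to the integral equation and the use of the cooperative comparison theorem \cite{Smith1995}. Here the residual bracket does \emph{not} cancel, since the two delayed competitive feedbacks on $x_2$ enter through distinct factors $G_1\neq G_2$ and lags $\tau_1\neq\tau_2$; thus $Y_1,Y_2$ do not decouple and need not decay on their own. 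I would therefore argue on $\omega$-limit sets: by the boundedness from well-posedness of \eqref{deq_competition}, the $\omega$-limit set of any positive solution is nonempty, compact, invariant, and chain transitive \cite{Hirsch2001}, and the goal of this step is to show $Y_1\equiv Y_2\equiv 0$ there, so that the dynamics reduces to the limiting system $x_i'(t)=\big(\mu_i+\kappa_i x_i(t)+c_i(t)\big)\big(x_i(t-\tau_i)-x_i(t)\big)$ with $c_1=\alpha_2 x_2$, $c_2=\alpha_1 x_1$.

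On this limiting system the delayed self-feedback $\partial x_i'/\partial x_i(t-\tau_i)=\mu_i+\kappa_i x_i+c_i>0$ is positive, as in Theorem~\ref{thm_single}, and the delayed cross-coupling has disappeared; only an instantaneous competitive coupling through the coefficient $c_i$ survives. On this set I would attempt to replicate the scalar argument of Theorem~\ref{thm_single} componentwise — constant states remain solutions, so the successive maxima of each component are candidates for a monotone sequence — combined with the available two-dimensional competitive structure. Having shown convergence to \emph{some} equilibrium, the selection in each case follows from Proposition~\ref{prop_competition_local} together with uniform persistence of the surviving species (obtained from the instability of the relevant boundary equilibria via acyclicity and persistence theory): in cases (a) and (b) one checks that $E_c$ is absent and exactly one of $E_1,E_2$ is stable, so persistence of the winner plus Theorem~\ref{thm_single} forces convergence to $E_1$ or $E_2$; in case (c), under \eqref{cond_Hs}, $E_c$ is the unique interior stable equilibrium and attracts; in case (d), under \eqref{cond_Hu}, the one-dimensional stable manifold of $E_c$ separates the basins of $E_1$ and $E_2$.

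I expect the reduction step to be the principal obstacle: unlike the scalar model, \eqref{deq_competition} has no positive delayed feedback and the Lyapunov-type quantities fail to decouple, so establishing $Y_1\equiv Y_2\equiv 0$ on $\omega$-limit sets — equivalently, controlling the residual coupling generated by the mismatch between $(G_1,\tau_1)$ and $(G_2,\tau_2)$ — is precisely the gap that keeps this a conjecture. A second, related difficulty is that even the limiting system is not globally monotone: the surviving instantaneous cross term $\alpha_i\big(x_i(t-\tau_i)-x_i(t)\big)$ is not sign-definite, so neither the cooperative comparison theorem nor the theory of competitive semiflows applies off the shelf, and the componentwise comparison argument must be carried out with this term controlled (it is small precisely when the solution is near a constant). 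Case (d), where one must delineate the two basins separated by the stable manifold of $E_c$, is the most delicate and would likely require a separate strong-competition persistence argument.
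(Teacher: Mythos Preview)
The paper offers \emph{no} proof of this statement: it is explicitly labeled a Conjecture, and the surrounding text says only that the authors ``propose the following conjectures'' and support them with bifurcation diagrams and numerical simulations (Figs.~\ref{fig_strong} and~\ref{fig_weak}). There is therefore nothing in the paper to compare your attempt against.

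That said, your outline is a sensible extrapolation of the machinery of Theorem~\ref{thm_single}, and your computation of $Y_1'$ is correct. You have also put your finger on exactly why the paper stops at a conjecture: the residual bracket $\alpha_2\big[G_1(x_2(t-\tau_1)-x_2)-G_2(x_2(t-\tau_2)-x_2)\big]$ does not vanish, so the pair $(Y_1,Y_2)$ is genuinely coupled and no monotone-decay argument is available; and even if one could force $Y_1\equiv Y_2\equiv 0$ on the $\omega$-limit set, the resulting limiting system is not cooperative (nor competitive in the order-preserving sense), so neither \cite{Smith1995} nor standard competitive-semiflow theory applies directly. Your proposal is thus an honest sketch of where a proof would have to go and why it does not yet exist, rather than a proof --- which is consistent with the status the paper itself assigns to the statement.
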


When time delays are small enough ($\tau<\tau_H^i$),
then our model of  competition between two species has similar outcomes
to the corresponding classical Lotka-Volterra
competition  ODE model. 
However, when delays are large, i.e.,  $\tau>\tau_H^i$,
in our delay model,  species $x_i$  dies out.
That does not happen in the corresponding ODE model.
In Wolkowicz and Xia~\cite{Wolkowicz1997},
the same conclusion was obtained
for their two species competition model with delay in a chemostat.

Bifurcation diagrams and numerical simulations
that illustrate Conjecture~\ref{conj_competition}
are shown in Figs.~\ref{fig_strong} and \ref{fig_weak}.
In both figures,
we take $(\gamma_1,\mu_1,\gamma_2,\mu_2)=(2,0.5,1.5,0.5)$
and use the delays $\tau_1$ and $\tau_2$ as bifurcation parameters.
In Fig.~\ref{fig_strong},
we take $(\alpha_1,\alpha_2,\kappa_1,\kappa_2)=(1,1.5,0.8,1)$,
which satisfies the strong interspecies competition condition $\alpha_1\alpha_2>\kappa_1\kappa_2$.
In region $D$, where both $\tau_1$ and $\tau_2$ are greater than certain critical values,
the conditions that $\R^{(1)}<1$ and $\R^{(2)}<1$ hold,
so solutions of system~\eqref{deq_competition}
converge to $E_0$ according to Theorem~\ref{thm_R0i_small}.
Region $C$,
bounded above by the curve $\alpha_1x_1^*=\kappa_2x_2^*$
and bounded below by the curve $\alpha_2x_2^*=\kappa_1x_1^*$
is where condition~\eqref{cond_Hu} is satisfied.
We choose $(\tau_1,\tau_2)=(1,1.5)$ in region $C$.
With the constant initial data $(x_1,x_2)=(0.8,0.1)$,
only $x_1$ survives;
with the constant initial data $(x_1,x_2)=(0.1,0.8)$
only $x_2$ survives.
Regions $A$ and $B$ correspond to cases (a) and (b) in Conjecture~\ref{conj_competition}.
We chose $(\tau_1,\tau_2)=(1,2)$ in region $A$
and verified that only $x_1$ survives.
We chose $(\tau_1,\tau_2)=(1.5,1)$ in region $B$
and verified that only $x_2$ survives.

In Fig.~\ref{fig_weak},
we take $(\alpha_1,\alpha_2,\kappa_1,\kappa_2)=(1,0.5,1.2,1)$,
which satisfies the weak interspecies competition condition $\alpha_1\alpha_2<\kappa_1\kappa_2$.
Regions $A$, $B$, and $D$
are similar to the corresponding regions in Fig.~\ref{fig_strong}.
Region $C$,
bounded above by the curve $\alpha_2x_2^*=\kappa_1x_1^*$
and bounded below by the curve $\alpha_1x_1^*=\kappa_2x_2^*$
is where condition~\eqref{cond_Hs} is satisfied.
We choose $(\tau_1,\tau_2)=(1,1)$ in region $C$.
With the constant initial data $(x_1,x_2)=(0.8,0.8)$,
we verified that both $x_1$ and $x_2$ survive.

\subsection{Adaptive Dynamics}
\label{sec_competition_adaptive}

Adaptive dynamics is a set of techniques that can be used to predict how traits evolve.
In this section, we use adaptive dynamics 
to consider  
how the length of the delay, $\tau$, in our model is predicted to evolve.

Assume that  the resident  species is denoted by $x_1$ and the mutant species
is denoted by $x_2$.
Assume also that   before the arrival of any members of the
mutant species, the resident population size has
converged to   
its delay reduced carrying capacity $x_1^*$ given by \eqref{eq:x*}.
Then the equation for $x_2'(t)$  at the time of the arrival of the first
mutants given by \eqref{deq_competition}  can be assumed to satisfy:
\[
\begin{aligned}
  x_2'
  &=\gamma_2 \exp\left(
    -\mu_2\tau_2-\kappa_2\int_{t-\tau_2}^{t} x_2(s)ds-\alpha_1 x_1^*\tau_2
  \right)x_2(t-\tau_2)
  \\
  &\qquad
  -\mu_2 x_2(t) - \kappa_2x_2^2(t)-\alpha_1 x_1^*x_2(t).
\end{aligned}
\]
Thus, $x_2$ can invade if
\[
  0
  <\gamma_2 e^{-\tau_2(\mu_2+\alpha_1 x_1^*)} -\mu_2 -\alpha_1 x_1^*,
\]
or, equivalently, using \eqref{eq:x*}, 
\[
  \kappa_2 x_2^*>\alpha_1x_1^*.
\]
Hence, the invasion exponent for mutant $x_2$
(see e.g.~Diekmann~\cite{Diekmann2004})
is \begin{equation}\label{r_x1x2}
  r_{x_1^*}(x_2^*)=\kappa_2x_2^*-\alpha_1x_1^*.
\end{equation}
Notice that $x_2$ can invade if $r_{x_1^*}(x_2^*)<0$.

We next assume that  the resident and mutant populations are identical except for their delay,
and thus, $\gamma_1=\gamma_2:=\gamma$, $\mu_1=\mu_2:=\mu$
and $\kappa_1=\kappa_2=\alpha_1=\alpha_2:=\kappa$.
From \eqref{r_x1x2}, it follows that $x_2$ can invade if $x_2^*>x_1^*$.
Without loss of generality, we scale $\kappa\mapsto 1$ in this section.
We consider the delay reduced carrying capacity as a function of the time delay.
Then the positive equilibrium of \eqref{deq_single}, denoted by $x^*(\tau)$,
is given by
\begin{equation}\notag
\gamma e^{-\tau (\mu+x^*(\tau))}=\mu+x^*(\tau).
\end{equation}
Differentiating this equation implicitly yields
\[\frac{d x^*(\tau)}{d\tau}=\frac{-(\mu+x^*(\tau))\gamma e^{-\tau (\mu+x^*(\tau))}}{1+\gamma\tau e^{-\tau (\mu+x^*(\tau))}} <0.
\]
Hence, the invasion exponent $r_{x_1^*}(x_2^*)=x^*(\tau_2) - x^*(\tau_1)$
is negative if and only if $\tau_2<\tau_1$.
Thus, the following result follows.

\begin{result}
If resident and mutant  species have identical parameters
except for the delay,
then if the mutant that takes a strategy with shorter delay than the resident
it would have a  larger delay reduced carrying capacity
and would be able to  invade successfully.
Thus, the evolutionary trend is to make the time delay as short as possible.
\end{result}

Next we discuss the situation
where there is a trade-off between the growth rate and time delay.
Motivated by the assumption that the species with shorter delay produces
less newborns than the species with longer delay,
we assume that the growth rate is an increasing function of time delay $\tau$.
More specifically, we let $\gamma(\tau)=\gamma_0(1+c\tau)$,
where $\gamma_0$ and $c$ are positive constants.
For simplicity,
we assume that $\mu_1=\mu_2:=\mu$
and that $\kappa_i=\alpha_i=1$.
Then equation~\eqref{def_xbar} becomes \begin{equation}\label{eq_xbarc}
  \gamma_0(1+c\,\tau)e^{-\tau(\mu+\kappa x)}-(\mu+x)=0,
\end{equation}
and the threshold value $\tau_H$
for the existence of $x^*$ is determined by
\begin{equation}\label{eq_tauHc}
  \gamma_0(1+c\,\tau_H)e^{-\mu\tau_H}=\mu.
\end{equation}
We assume that $\gamma_0>\mu$
to ensure that the threshold value $\tau_H$
defined by \eqref{eq_tauHc} is positive.
Differentiating  \eqref{eq_xbarc}
implicitly yields
\begin{equation}\label{eq_dxbarc}
  \frac{d x^*(\tau)}{d\tau}
  =\frac{\gamma_0\, e^{-\tau (\mu+x^*(\tau))}}
  {1+\gamma_0\,\tau e^{-\tau (\mu+x^*(\tau))}}
  \left[c-(1+c\tau)(\mu+x^*(\tau))\right].
\end{equation}
Setting $\tau=0$ in \eqref{eq_xbarc} gives $x^*(0)=\gamma_0-\mu$.
By \eqref{eq_dxbarc} it follows that
$\frac{dx^*}{d\tau}(0)$ has the same sign as $c-\gamma_0$.
Note also that $\frac{dx^*}{d\tau}(\tau_H)\le 0$
since $x^*(\tau_H)=0$ and $x^*(\tau)>0$ for $\tau<\tau_H$.
Therefore, in the case that $c>\gamma_0$,
there exists at least one $\tau^*<\tau_H$
such that $\tau^*>0$ is a critical point of $x^*(\tau)$.
In terms of Evolutionary Game Theory
(see e.g.\ Vicent and Brown~\cite{Vincent2005}),
all members of a population adopting $\tau=\tau^*$
is an evolutionarily stable strategy (ESS),
which means that
no mutant strategy could invade the population
under the influence of natural selection.

\begin{result}
If the growth rate 
and the delay are linearly positively correlated and
the growth rate with no delay is large enough,
then there is
a critical value $\tau^*>0$
such that taking $\tau$ to be $\tau^*$
is an ESS.
\end{result}

Consider for example $c=8$, $\gamma_0=3$ and $\mu=2$.
The graph of $x^*(\tau)$ is shown in Fig.~\ref{fig:adaptive}.
In this case,
the function $x^*(\tau)$ has a global maximum point $\tau^*$.
Thus, for the evolution of trait $\tau$,
the best strategy is to make the delay as close to  $\tau^*$ as possible.

\begin{figure}[hbtp]\centering
{\includegraphics[trim = 0cm .5cm 1cm 1cm, clip, width=0.58\textwidth]{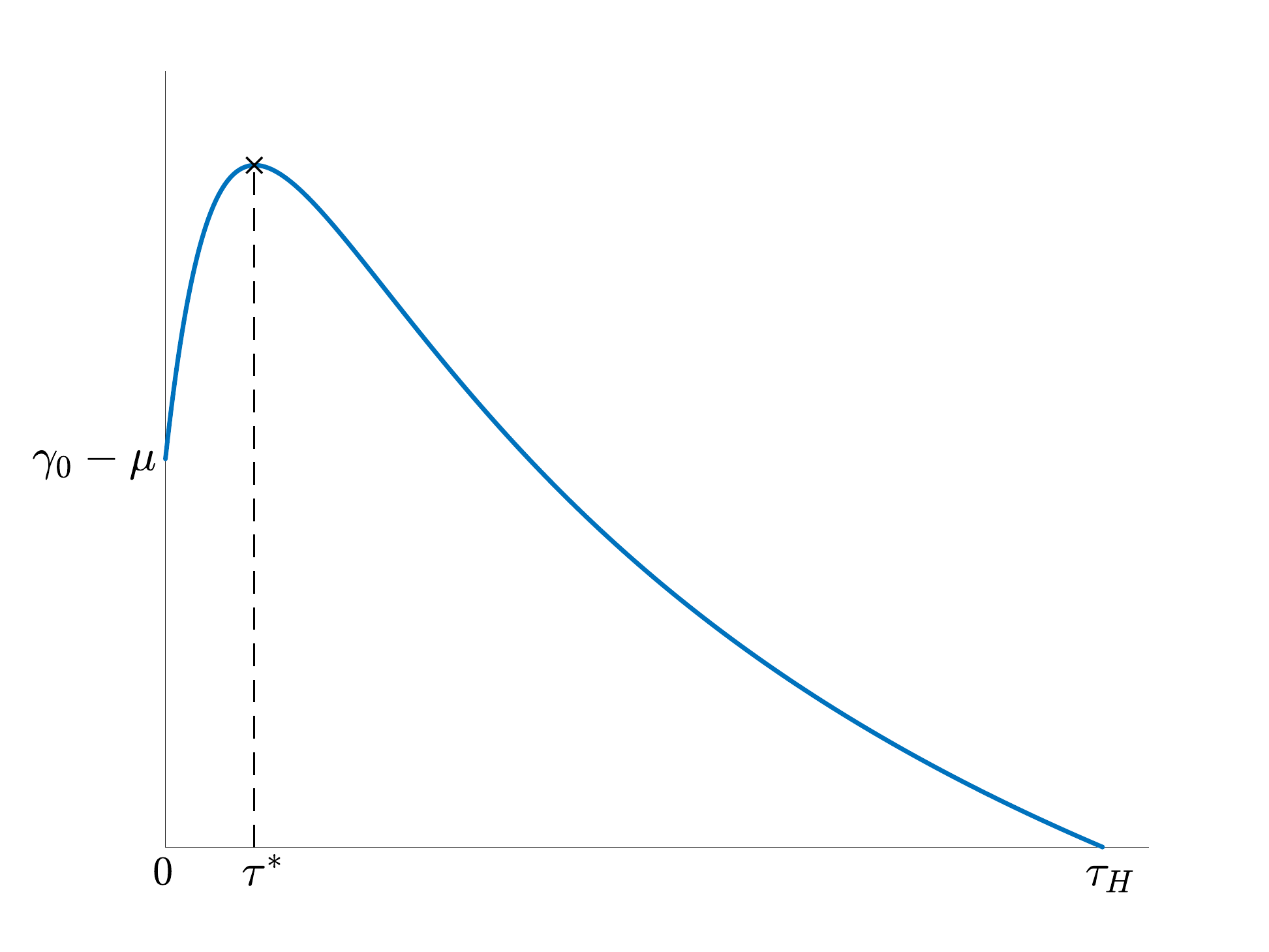}}
\caption{The graph of $x^*(\tau)$,
where $x^*(\tau)$ is 
the solution of \eqref{def_xbar}
with $\gamma(\tau)=\gamma_0(1+c\tau)$.
When $c=8$, $\gamma_0=3$ and $\mu=2$,
the threshold delay is $\tau_H\approx 1.48$,
and the graph has a unique local maximum $\tau^*\approx 0.14$
in the interval $(0,\tau_H)$.
}
\label{fig:adaptive}
\end{figure}

\section{Conclusion and Discussion}
\label{sec_conclusion}

Based on the assumption of
the decay-consistent delay in growth
for model \eqref{deq:A2006}
in Arino et al.~\cite{Arino2006},
we derived the novel single species delayed model \eqref{deq_single}.
The main difference 
between these two models
is whether juveniles and adults compete with each other.
In model \eqref{deq:A2006}
the mature subgroup only competes within that subgroup.
In contrast,
in our model \eqref{deq_single}
the mature subgroup competes with the whole population.
These two models fit different types of behavior.
Model~\eqref{deq:A2006}
is suitable for species
whose development includes several stages
(egg, larva, pupa, and adult)
such as holometabolous insects,
for which juveniles do not compete with
adults because they have different living environments,
and there is no intraspecific competition (crowding or direct interference)
between juveniles and adults.
On the other hand,
model \eqref{deq_single}
is suitable for most mammalian species,
for which juveniles and adults share the same environment.

All solutions  of our logistic growth DDE \eqref{deq_single}
 converge to an equilibrium
with value depending on the delay.
If the delay is too long,
this model predicts that the population dies out.
A threshold giving the interface between extinction and survival
is determined in terms of parameters in the model.
Our model and model~\eqref{deq:A2006}
have similar dynamics.
While they have different positive equilibria,
they have the same survival threshold $\R{}$,
given by \eqref{def_R0}.
The population approaches
the delay reduced carrying capacity if $\R{}>1$;
otherwise it goes to extinction.
Thus neither model has sustained oscillations
as in Hutchinson's equation \eqref{deq_Hutchinson},
which exhibits stable periodic solutions when $\tau$ is large enough
(see e.g.\ Hale and Verduyn~\cite{Hale1993}).
Using DDEBIF-TOOL~\cite{Engelborghs2002,Sieber2014},
we show a comparison of the dynamics of
Hutchinson's equation
and models \eqref{deq:A2006} and \eqref{deq_single}
in Fig.~\ref{fig:ddebif}.

\begin{figure}[hbtp]\centering
  \includegraphics[width=0.48\textwidth]{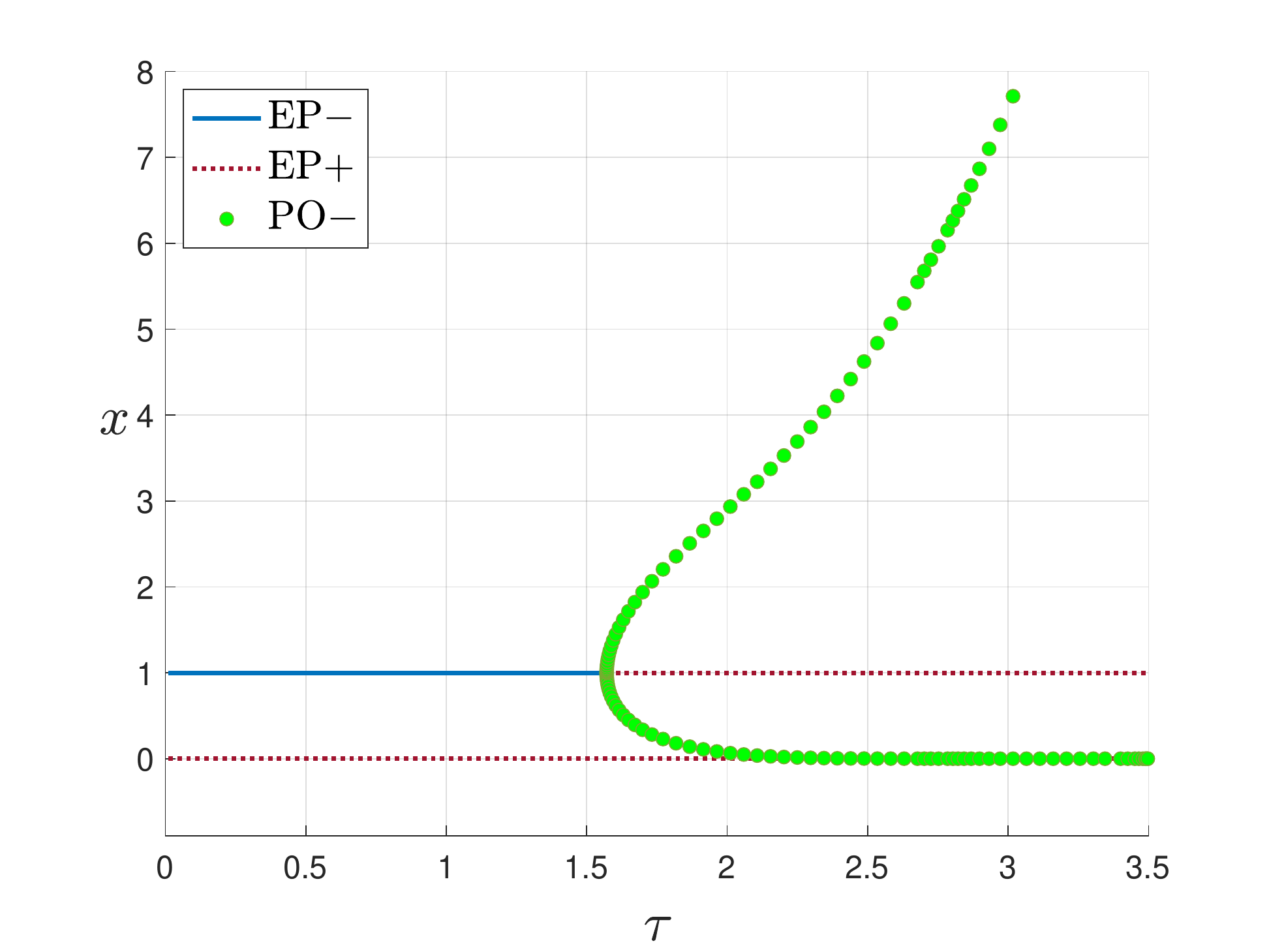}
  \includegraphics[width=0.48\textwidth]{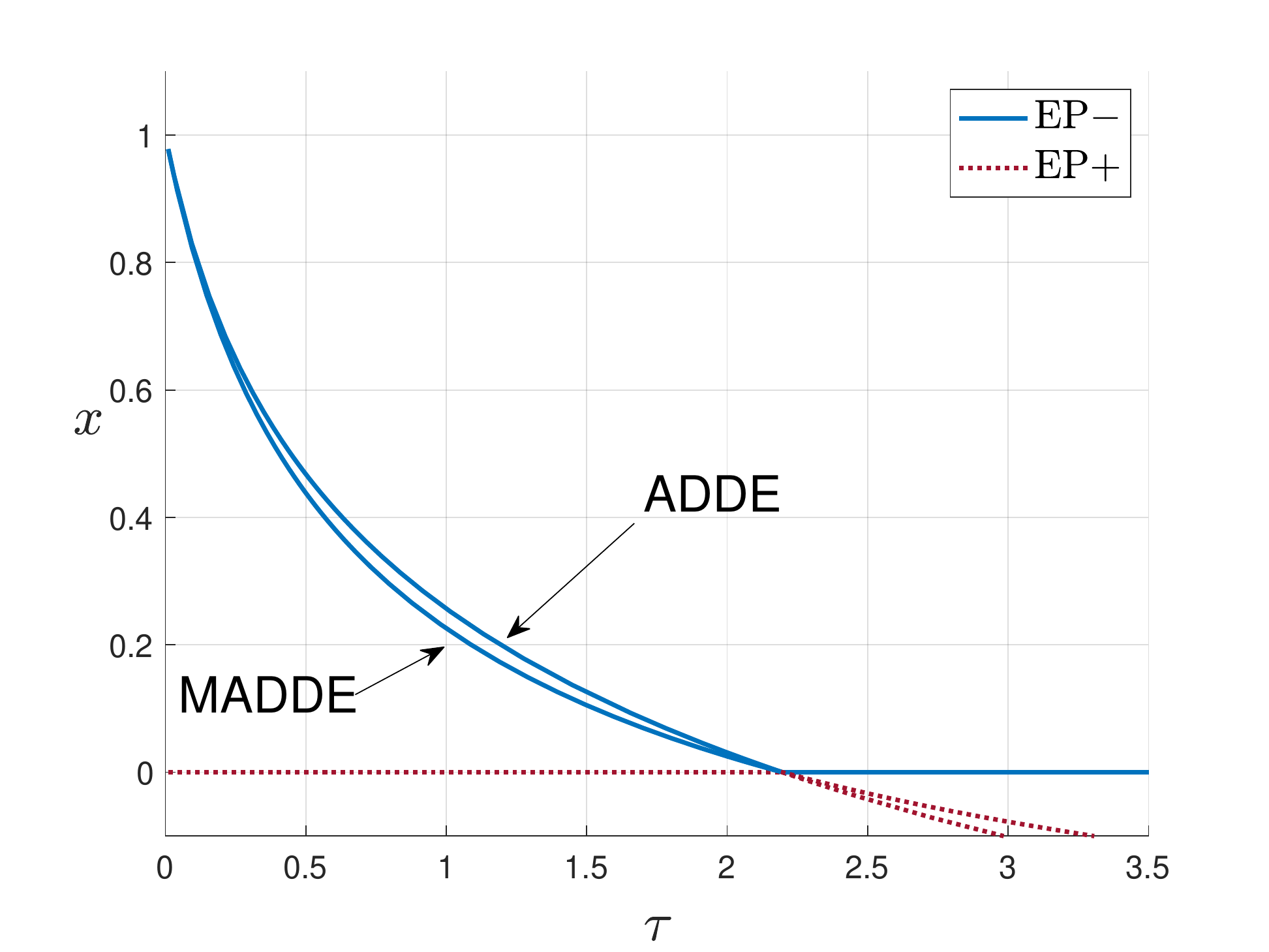}
  \\
  (a)\hspace{17em}(b)
\caption{Bifurcation diagrams for
(a) Hutchinson's equation with $r=K=1$, and
(b) the alternative logistic DDE \eqref{deq:A2006}, denoted by ADDE,
and the mixed the alternative logistic DDE \eqref{deq_single}, denoted by MADDE,
with $\gamma=1.5$, $\mu=0.5$ and $\kappa=1$.
In each figure, EP$-$
	indicates
	asymptotically stable equilibrium point,
EP$+$ indicates unstable equilibrium point,
	and PO$-$ indicates an orbitally asymptotically stable periodic orbit.
	Notice that in (a) the interior equilibrium is a constant
	function of the delay, but in (b), the interior  equilibria are decreasing
	functions of the delay $\tau$ and on the left of the transcritical
	bifurcation involving the interior equilibria and the extinction
	equilibrium,  the stable interior equilibrium for the ADDE model is
	slightly larger than the 
	stable equilibrium for the MADDE model.}
\label{fig:ddebif}
\end{figure}

Another advantage of
our derivation of logistic DDE
is that it can readily
involve more than one interacting population.
The alternative logistic DDE of Arino et al.~\cite{Arino2006}
can also be extended to a competition model
under certain conditions (see Lin et~al.~\cite{Lin2018}),
but it is unclear modify that approach in general cases.
Our competitive system \eqref{deq_competition}
was naturally generalized from our logistic DDE~\eqref{deq_single}.

For our system \eqref{deq_competition} modeling competitive interactions,
the outcome
are parallel to the two species competitive ordinary differential systems:
is either competitive exclusion holds,
two species bistable,
or the unique positive coexistence equilibrium is globally stable.
The results are parallel to that
for the competitive system \eqref{deq:Lv_competition}
in Lv et al.~\cite{Lv2017a} who 
considered a competitive system
where the maturity of a species individual
is not an instantaneous process and
 proposed the equations
\begin{equation}\label{deq:Lv_competition}
\begin{aligned}
  x_1'
  &=\gamma_1 e^{-\mu_1 \tau(x_1)}x_1(t-\tau(x_1))
  -\kappa_1x_1^2-\alpha_2 x_1 x_2,\\
  x_2'
  &=\gamma_2 e^{-\mu_2 \tau(x_2)}x_2(t-\tau(x_2))
  -\kappa_2x_2^2-\alpha_1 x_1 x_2,
\end{aligned}
\end{equation}
where $\tau(x)$ is an increasing function.

From the adaptive dynamics analysis, if the resident and mutant species are identical
except for their time delay,
then the evolutionary trend is to make the  delay as short as possible.
Thus, after long term evolution, the time delay would aproach  0, and the system
would reduce to a system of  ordinary differential equations.
However, a time delay for both reproduction and growth does exist in actual species.
Therefore, we also consider the case when the mutant has the same parameters except
for their time delay and their growth rate,
and hence there is a trade-off between the growth rate and the delay. Say the longer
delay allows more or larger newborns.
Then, there may be a positive delay $\tau^*$ that maximizes the delay reduced
carrying capacity and $0<\tau^*<\tau_H$.
In this scenario, the  delay trait in natural ecosystems would tend to move toward $\tau^*>0$ according to our model.

\section*{Acknowledgement}
 The research of Gail S. K. Wolkowicz was partially supported by a Natural Sciences and Engineering Research Council of Canada (NSERC) Discovery grant with accelerator supplement. 


\begin{thebibliography}{10}
\providecommand{\url}[1]{{#1}}
\providecommand{\urlprefix}{URL }
\expandafter\ifx\csname urlstyle\endcsname\relax
  \providecommand{\doi}[1]{DOI~\discretionary{}{}{}#1}\else
  \providecommand{\doi}{DOI~\discretionary{}{}{}\begingroup
  \urlstyle{rm}\Url}\fi

\bibitem{Arino2006}
Arino, J., Wang, L., Wolkowicz, G.S.K.: An alternative formulation for a
  delayed logistic equation.
\newblock J. Theoret. Biol. \textbf{241}(1), 109--119 (2006).
\newblock \doi{10.1016/j.jtbi.2005.11.007}

\bibitem{Baker2020}
Baker, R.E., R\"{o}st, G.: Global dynamics of a novel delayed logistic equation
  arising from cell biology.
\newblock J. Nonlinear Sci. \textbf{30}(1), 397--418 (2020).
\newblock \doi{10.1007/s00332-019-09577-w}

\bibitem{Bocharov2000}
Bocharov, G., Hadeler, K.: Structured population models, conservation laws, and
  delay equations.
\newblock J. Differential Equations \textbf{168}(1), 212 -- 237 (2000).
\newblock \doi{10.1006/jdeq.2000.3885}

\bibitem{Cooke1999}
Cooke, K., van~den Driessche, P., Zou, X.: Interaction of maturation delay and
  nonlinear birth in population and epidemic models.
\newblock J. Math. Biol. \textbf{39}(4), 332--352 (1999).
\newblock \doi{10.1007/s002850050194}

\bibitem{Diekmann2004}
Diekmann, O.: A beginner's guide to adaptive dynamics.
\newblock In: Mathematical modelling of population dynamics, \emph{Banach
  Center Publ.}, vol.~63, pp. 47--86. Polish Acad. Sci. Inst. Math., Warsaw
  (2004)

\bibitem{Engelborghs2002}
Engelborghs, K., Luzyanina, T., Roose, D.: Numerical bifurcation analysis of
  delay differential equations using {DDE-BIFTOOL}.
\newblock ACM Transactions on Mathematical Software (TOMS) \textbf{28}(1),
  1--21 (2002).
\newblock \doi{10.1145/513001.513002}

\bibitem{Gourley2015}
Gourley, S.A., Liu, R.: Delay equation models for populations that experience
  competition at immature life stages.
\newblock J. Differential Equations \textbf{259}(5), 1757--1777 (2015).
\newblock \doi{10.1016/j.jde.2015.03.012}

\bibitem{Hadeler2003}
Hadeler, K.P., Bocharov, G.: Where to put delays in population models, in
  particular in the neutral case.
\newblock Can. Appl. Math. Q. \textbf{11}(2), 159--173 (2003)

\bibitem{Hale1993}
Hale, J.K., Verduyn~Lunel, S.M.: Introduction to functional-differential
  equations, \emph{Applied Mathematical Sciences}, vol.~99.
\newblock Springer-Verlag, New York (1993).
\newblock \doi{10.1007/978-1-4612-4342-7}

\bibitem{Hayes1950}
Hayes, N.D.: Roots of the transcendental equation associated with a certain
  difference-differential equation.
\newblock J. London Math. Soc. \textbf{25}, 226--232 (1950).
\newblock \doi{10.1112/jlms/s1-25.3.226}

\bibitem{Hirsch2001}
Hirsch, M.W., Smith, H.L., Zhao, X.Q.: Chain transitivity, attractivity, and
  strong repellors for semidynamical systems.
\newblock J. Dynam. Differential Equations \textbf{13}(1), 107--131 (2001).
\newblock \doi{10.1023/A:1009044515567}

\bibitem{Hirsch1985}
Hirsch, W.M., Hanisch, H., Gabriel, J.P.: Differential equation models of some
  parasitic infections: methods for the study of asymptotic behavior.
\newblock Comm. Pure Appl. Math. \textbf{38}(6), 733--753 (1985).
\newblock \doi{10.1002/cpa.3160380607}

\bibitem{Hutchinson1948}
Hutchinson, G.E.: Circular causal systems in ecology.
\newblock Ann. N.Y. Acad. Sci. \textbf{50}(4), 221--246 (1948).
\newblock \doi{10.1111/j.1749-6632.1948.tb39854.x}

\bibitem{Lin2018}
Lin, C.J., Wang, L., Wolkowicz, G.S.K.: An alternative formulation for a
  distributed delayed logistic equation.
\newblock Bull. Math. Biol. \textbf{80}(7), 1713--1735 (2018).
\newblock \doi{10.1007/s11538-018-0432-4}

\bibitem{Liu2015}
Liu, R., R\"{o}st, G., Gourley, S.A.: Age-dependent intra-specific competition
  in pre-adult life stages and its effects on adult population dynamics.
\newblock European J. Appl. Math. \textbf{27}(1), 131--156 (2016).
\newblock \doi{10.1017/S0956792515000418}

\bibitem{Lv2017a}
Lv, Y., Yuan, R., Pei, Y., Li, T.: Global stability of a competitive model with
  state-dependent delay.
\newblock J. Dynam. Differential Equations \textbf{29}(2), 501--521 (2017).
\newblock \doi{10.1007/s10884-015-9475-5}

\bibitem{Nisbet1982}
Nisbet, R.M., Gurney, W.: Modelling fluctuating populations: reprint of first
  Edition (1982).
\newblock Blackburn Press (2003)

\bibitem{Sieber2014}
Sieber, J., Engelborghs, K., Luzyanina, T., Samaey, G., Roose, D.:
  {DDE-BIFTOOL} manual-bifurcation analysis of delay differential equations
  (2014)

\bibitem{Smith1995}
Smith, H.L.: Monotone dynamical systems, \emph{Mathematical Surveys and
  Monographs}, vol.~41.
\newblock American Mathematical Society, Providence, RI (1995).
\newblock An introduction to the theory of competitive and cooperative systems

\bibitem{Geritz2012}
Stefan, A., Geritz, H., Kisdi, {\'E}.: Mathematical ecology: why mechanistic
  models?
\newblock J. Math. Biol. \textbf{65}(6-7), 1411 (2012).
\newblock \doi{10.1007/s00285-011-0496-3}

\bibitem{Verhulst1838}
Verhulst, P.F.: Notice sur la loi que la population suit dans son
  accroissement.
\newblock Corr. Math. et Phy. \textbf{10}, 113--121 (1838)

\bibitem{Vincent2005}
Vincent, T.L., Brown, J.S.: Evolutionary game theory, natural selection, and
  Darwinian dynamics.
\newblock Cambridge University Press (2005)

\bibitem{Wolkowicz1997}
Wolkowicz, G.S.K., Xia, H.: Global asymptotic behavior of a chemostat model
  with discrete delays.
\newblock SIAM J. Appl. Math. \textbf{57}(4), 1019--1043 (1997).
\newblock \doi{10.1137/S0036139995287314}

\end{thebibliography}

\end{document}